\documentclass[11pt]{amsart}
\usepackage{extarrows}
\usepackage[colorlinks, citecolor=blue, dvipdfm, pagebackref]{hyperref}
\usepackage[all]{xy}

\newtheorem{theorem}{Theorem}[section]
\newtheorem{lemma}[theorem]{Lemma}

\theoremstyle{definition}
\newtheorem{definition}[theorem]{Definition}
\newtheorem{question}[theorem]{Question}
\newtheorem{example}[theorem]{Example}

\newtheorem{proposition}[theorem]{Proposition}
\newtheorem{corollary}[theorem]{Corollary}
\newtheorem{remark}[theorem]{Remark}

\theoremstyle{remark}

\newcommand{\be}{\begin{equation}}
\newcommand{\ee}{\end{equation}}

\numberwithin{equation}{section}

\begin{document}
\title{The $\chi_y$-genus, Chern number inequalities and signature}
\author{Ping Li}
\address{School of Mathematical Sciences, Fudan University, Shanghai 200433, China}

\email{pinglimath@fudan.edu.cn\\
pinglimath@gmail.com}
\author{Yibo Ren}

\address{School of Mathematical Sciences, Fudan University, Shanghai 200433, China}

\email{ybren24@m.fudan.edu.cn}

\thanks{The authors were partially supported by the National
Natural Science Foundation of China (Grant No. 12371066).}

\subjclass[2010]{32Q55, 57R20, 53D20, 32Q60, 58J20.}


\keywords{The Hirzebruch $\chi_y$-genus, Chern number inequality, signature, Novikov number, symplectic circle action, Hamiltonian circle action, rational homogeneous manifold, toric variety, K\"{a}hler hyperbolic manifold, unimodality, (reverse) Cauchy-Schwarz inequality.}

\begin{abstract}
This article has two parts. In the first part we introduce two positivity conditions for the modified $\chi_y$-genus on almost-complex manifolds and show that each of them implies a family of optimal Chern number inequalities. It turns out that many important K\"{a}hler and symplectic manifolds satisfy either of the two positivity conditions, and hence these Chern number inequalities hold true on them. In the second part we focus on the signature, a special value of the $\chi_y$-genus, of symplectic manifolds equipped with symplectic circle actions and give applications. Our results in this part unify and generalize various related results in the existing literature.
\end{abstract}

\maketitle

\section{Introduction}
The $\chi_y$-genus $\chi_y(M)\in\mathbb{Z}[y]$ was introduced by Hirzebruch in his seminal book \cite{Hi66} for projective manifolds $M$. Its coefficients can be expressed in terms of linear combination of Chern numbers via the celebrated Hirzebruch-Riemann-Roch theorem, also established by him in \cite{Hi66}. Since these integers can be interpreted as the indices of some Dolbeault-type elliptic differential operators, the later Atiyah-Singer index theorem
implies that it still holds true for general compact almost-complex manifolds (\cite{AS}). This $\chi_y$-genus has many remarkable properties and applications in related areas. As samples we refer the reader to \cite{Ko70}, \cite{LW}, \cite{Sa}, \cite{Li12}, \cite{Li15}, \cite{Li17}, \cite{Li19}, \cite{LP}, and the references therein.

The \emph{first main purpose} in this article is to introduce two positivity conditions for the (slightly modified) $\chi_y$-genus and show that either of them yields a family of optimal Chern number inequalities. It turns out that manifolds satisfying either of these positivity conditions include rational homogeneous manifolds, smooth toric varieties, Fano contact manifolds, K\"{a}hler hyperbolic manifolds, and symplectic manifolds endowed with some specific symplectic circle actions. The work of this part is partially inspired by \cite{Li19}, where such Chern number inequalities on K\"{a}hler hyperbolic manifolds have been obtained by the first author.

For a special value of $y$, $\chi_y(M)$ is an
important invariant: $\chi_y(M)\big|_{y=1}$ is the signature of $M$. So our \emph{second main purpose} in this article is to focus on the signature of symplectic manifolds admitting symplectic circle actions. Along this line we are able to unify and extend some previous results.

The rest of this article is structured as follows. In Section \ref{section2} we recall Hirzebruch's $\chi_y$-genus, introduce two positivity conditions and state the optimal Chern number inequalities, whose proof shall be given in Section \ref{section3}. Sections \ref{section4} and \ref{section5} are devoted to applications to related K\"{a}hler and symplectic manifolds respectively. In Section \ref{section6} we investigate the signature formula for symplectic manifolds equipped with symplectic circle actions in terms of Novikov numbers and Betti numbers, which is then applied in Section \ref{section7} to get various Betti numbers restrictions on these manifolds.

Before ending this Introduction, we make the convention that all almost-complex, symplectic and K\"{a}hler  manifolds mentioned in the sequel are closed, connected, oriented and of real dimension $2n$, unless otherwise stated. The orientation of such a manifold is always taken the canonical one induced from its (compatible) (almost-)complex structure.

\section{Positivity conditions on the $\chi_y$-genus and Chern number inequalities}\label{section2}
In this section we briefly recall the Hirzebruch $\chi_y$-genus on almost-complex manifolds, introduce on it two positivity conditions (Definition \ref{def}) and state a family of optimal Chern number inequalities (Theorem \ref{main}).

\subsection{The Hirzebruch $\chi_y$-genus}
Let $(M,J)$ be an almost-complex manifold
with almost-complex structure $J$. As
usual we denote by $\bar{\partial}$ the $d$-bar operator which
acts on the complex vector spaces $\Omega^{p,q}(M)$ consisting of $(p,q)$-type complex-valued differential forms on $(M,J)$ in the sense of $J$ (\cite[p.27]{We}), where $0\leq p,q\leq
n$. The choice of an almost-Hermitian metric
on $(M,J)$ enables us to define the formal adjoint
$\bar{\partial}^{\ast}$ of the
$\bar{\partial}$-operator. Then for each $0\leq p\leq n$, we have
the following Dolbeault-type elliptic differential operator $D_p$:
\be\label{GDC}D_p:=\bar{\partial}+\bar{\partial}^{\ast}:~\bigoplus_{\textrm{$q$
even}}\Omega^{p,q}(M)\longrightarrow\bigoplus_{\textrm{$q$
odd}}\Omega^{p,q}(M),\ee whose index is denoted by $\chi^{p}(M)$ in
the notation of Hirzebruch (\cite{Hi66}).
\emph{The Hirzebruch $\chi_{y}$-genus}, denoted by $\chi_{y}(M)$, is the generating
function of these indices $\chi^p(M)$:
$$\chi_{y}(M):=\sum_{p=0}^{n}\chi^{p}(M)\cdot y^{p}.$$

When $J$ is integrable, i.e., $M$ is an $n$-dimensional complex
manifold, which is equivalent to the condition that
$\bar{\partial}^2\equiv0$, the two-step elliptic complex (\ref{GDC})
has  the following resolution, which is the well-known
Dolbeault complex: \be\label{DC}0\rightarrow\Omega^{p,0}(M)
\xrightarrow{\bar{\partial}}\Omega^{p,1}(M)
\xrightarrow{\bar{\partial}}\cdots\xrightarrow{\bar{\partial}}\Omega^{p,n}(M)\rightarrow
0\nonumber\ee and hence
 \be\label{Hodgenumber}\chi^{p}(M)=\sum_{q=0}^{n}(-1)
^{q}\text{dim}_{\mathbb{C}}H^{p,q}
_{\bar{\partial}}(M)=:\sum_{q=0}^{n}(-1)^{q}h^{p,q}(M).\ee Here
$h^{p,q}(M)$ are the Hodge numbers of $M$, which are
the complex dimensions of the corresponding Dolbeault cohomology
groups $H^{p,q} _{\bar{\partial}}(M)$.

For instance, let $\mathbb{P}^n$ be the $n$-dimensional complex projective space, then $\chi^p(\mathbb{P}^n)=(-1)^p$ as $h^{p,p}(\mathbb{P}^n)=1$ and $h^{p,q}(\mathbb{P}^n)=0$ whenever $p\neq q$, and hence \be\label{0}\chi_y(\mathbb{P}^n)=\sum_{p=0}^n(-y)^p.\ee

The general form of the Hirzebruch-Riemann-Roch theorem, which is a
corollary of the Atiyah-Singer index theorem, allows us to compute
$\chi_y(M)$ for an almost-complex manifold $M$ in terms of its Chern numbers as follows (see \cite[\S4]{AS} or \cite[p.61]{HBJ})
\be\label{HRR}\chi_y(M)=
\int_M\prod_{i=1}^n
\frac{x_i(1+ye^{-x_i})}{1-e^{-x_i}},\ee
where $x_1,\ldots,x_n$ are formal Chern roots of $(M,J)$, i.e., the
$i$-th elementary symmetric polynomial of $x_1,\ldots,x_n$ represents $c_i(M)$, the
$i$-th Chern class of $(M,J)$:
 $$c_1(M)=x_1+\cdots+x_n,\quad c_2(M)=\sum_{1\leq i<j\leq n}x_ix_j,\quad\ldots,\quad c_n(M)=x_1x_2\cdots x_n.$$

This $\chi_y(M)$ satisfies the self-reciprocity up to a sign:
\be\label{chiyduality}\chi_y(M)=(-y)^n\cdot\chi_{y^{-1}}(M),\ee
which is equivalent to
$\chi^p=(-1)^n\chi^{n-p}$ for all $p$, and can be derived from (\ref{HRR}). When $J$ is integrable, this fact can also be seen from the Serre duality $h^{p,q}=h^{n-p,n-q}$ via (\ref{Hodgenumber}).

\subsection{The modified version}\label{section2.2}
The \emph{complex genus} in the sense of Hirzebruch is a ring homomorphism from the complex cobordism ring to $\mathbb{Q}$ (\cite[\S 1.8]{HBJ}). It turns out that there is a one-to-one correspondence between genera and normalized formal power series $$Q(x)=1+\sum_{i\geq1}a_ix^i\quad(a_i\in\mathbb{Q}).$$

When viewing $x(1+ye^{-x})/(1-e^{-x})$ arising from (\ref{HRR}) as a formal power series in $x$, its constant term is $1+y$. A simple trick shows that (see \cite[p.62]{HBJ})
\be\chi_y(M)=
\int_M\prod_{i=1}^n
\frac{x_i\big(1+ye^{-x_i(1+y)}\big)}{1-e^{-x_i(1+y)}}.\nonumber\ee
Hence $\chi_y(\cdot)$ is a complex genus whose associated formal power series with a parameter $y$ is
\be\label{power series}Q(y;x)=\frac{x\big(1+ye^{-x(1+y)}\big)}{1-e^{-x(1+y)}}=1+\cdots,\nonumber\ee
which justifies its name and is the \emph{original} definition for almost-complex manifolds by Hirzebruch (\cite[\S10.2]{Hi66}).

For three values of $y$, $\chi_y(M)$ is an
important invariant (\cite[p.62]{HBJ}): $\chi_y(M)\big|_{y=-1}=c_n[M]$ is the Euler
characteristic of $M$, $\chi_y(M)\big|_{y=0}=\chi^0(M)$ the Todd genus of
$M$, and $\chi_y(M)\big|_{y=1}$ the signature of $M$.

Instead of the $\chi_y$-genus, in the sequel we shall consider the $\chi_{-y}$-genus with normalized power series $Q(-y;x)$:
$$\chi_{-y}(M)=\int_M\prod_{i=1}^nQ(-y;x_i)=
\sum_{p=0}^n(-1)^p\chi^p(M)y^p.$$
The advantages for choosing the $\chi_{-y}$-genus shall be clear as the paper progresses \big(recall the formulas (\ref{0}) and (\ref{chiyduality})\big). At this moment we would like to mention a combinatorial ``reason", which has been noticed by Hirzebruch (\cite{Hi92}, \cite[p.12]{Hi08}):
$$\frac{x}{Q(-y;-x)}=\frac{e^{x(1-y)-1}}{1-ye^{x(1-y)}}
=\sum_{i=1}^{\infty}P_i(y)\frac{x^i}{i!},$$
where $P_i(y)$ are the famous \emph{Eulerian polynomials} (\cite[p.22]{St97}). Here $Q(-y;-x)$ is the normalized power series corresponding to the genus $(-1)^n\chi_{-y}(\cdot)$.

\subsection{Positivity conditions and Chern number inequalities}
\begin{definition}\label{def}
An almost-complex manifold $M$ is called \emph{$\chi$-positive} (resp. \emph{signed $\chi$-positive}) if all the coefficients of $\chi_{-y}(M)$ are positive (resp. signed positive), i.e., $(-1)^p\chi^p(M)>0$ (resp. $(-1)^{n+p}\chi^p(M)>0$) for all $0\leq p\leq n$. 
\end{definition}

With these notions understood, we have the following optimal Chern number inequalities for such manifolds.
\begin{theorem}\label{main}
Let $M$ be an almost-complex manifold, which is either $\chi$-positive or signed $\chi$-positive. Take $\epsilon=1$ (resp. $\epsilon=-1$) if $M$ is $\chi$-positive (resp. signed $\chi$-positive). Then $M$ satisfies $[\frac{n}{2}]+1$ optimal Chern number inequalities
\be\label{chernnumberinequality}
\begin{split}
A_i(c_1,\ldots,c_n)[M]&\geq\epsilon^nA_i\Big({n+1\choose 1},\ldots,{n+1\choose n}\Big)\\
&=\epsilon^nA_{i}
(c_1,\ldots,c_n)[\mathbb{P}^n],\qquad0\leq i\leq[\frac{n}{2}],
\end{split}\ee
where $A_i(c_1,\ldots,c_n)$ can be determined by a recursive algorithm and the first three Chern number inequalities read as follows
\begin{eqnarray}\label{firstfew}
\left\{\begin{array}{ll}
\begin{split}
&\epsilon^nc_n[M]\geq n+1,\\
~\\
&\epsilon^n\Big[\frac{n(3n-5)}{2}c_{n}+c_{1}c_{n-1}\Big][M]
\geq2(n-1)n(n+1),\\
~\\
&\epsilon^n\Big[n(15n^{3}-150n^{2}+485n-502)c_{n}+
4(15n^{2}-85n+108)c_{1}c_{n-1}\\
&+8(c_{1}^{2}+3c_{2})c_{n-2}-
8(c_{1}^{3}-3c_{1}c_{2}+3c_{3})c_{n-3}\Big][M]\\
&\geq\epsilon^nA_2\Big({n+1\choose 1},\ldots,{n+1\choose n}\Big).\end{split}
\end{array} \right.
\end{eqnarray}
Moreover, the $i$-th equality case in (\ref{chernnumberinequality}) occurs if and only if
\be\label{equalitycase1}\chi^p(M)=\epsilon^n(-1)^{p},\qquad 2i\leq p\leq n.\ee
\end{theorem}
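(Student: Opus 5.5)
The plan is to reduce everything to the coefficients of the polynomial
$$f(y):=\chi_{-y}(M)=\sum_{p=0}^n a_py^p,\qquad a_p:=(-1)^p\chi^p(M)\in\mathbb{Z},$$
and then use integrality. By (\ref{chiyduality}) we have $\chi^p=(-1)^n\chi^{n-p}$, so $a_{n-p}=a_p$ and $f$ is palindromic. The hypothesis says $\epsilon^na_p>0$ for all $p$, where the case $\epsilon=-1$ uses $(-1)^{n+p}\chi^p>0$. Since the $a_p$ are integers, this gives $\epsilon^na_p\geq1$ for every $p$. For $\mathbb{P}^n$, (\ref{0}) gives $a_p=1$ for all $p$.

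\textbf{Step 1: the inequalities as positive-weight sums.} For each $0\leq i\leq[\frac n2]$ I would look for weights $w_i(p)$ with three properties. First, $w_i(p)$ is symmetric under $p\mapsto n-p$. Second, $w_i(p)=0$ for $p<2i$ and for $p>n-2i$. Third, $w_i(p)>0$ for $2i\leq p\leq n-2i$. I would then set
$$A_i[M]:=\sum_p w_i(p)\,a_p.$$
Using $\epsilon^na_p\geq1$ on the support of $w_i$ immediately gives
$$\epsilon^nA_i[M]\geq\sum_pw_i(p)=A_i[\mathbb{P}^n].$$
Equality holds if and only if $\epsilon^na_p=1$ for every $p$ in $[2i,n-2i]$. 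By the palindromic symmetry this is the same as $\epsilon^na_p=1$ for $2i\leq p\leq n$, which is (\ref{equalitycase1}). So the inequality and the equality case are formal once suitable $A_i$ exist. The real content is that $A_i$ is a specific, low-complexity Chern number combination.

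\textbf{Step 2: expressing $A_i$ through Chern numbers.} I would expand $f$ around $y=1$, in the spirit of Libgober--Wood and Salamon. Each derivative $f^{(k)}(1)$, and hence each moment $\sum_pp^ka_p$, is given through (\ref{HRR}) by an explicit linear combination of Chern numbers. In low order this combination involves only the classes $c_jc_{n-j}$, $c_1^2c_{n-2}$, and so on. Concretely:
\begin{itemize}
\item $f(1)=c_n[M]$, which gives the $i=0$ inequality $\epsilon^nc_n\geq n+1$.
\item The second-order term produces $c_1c_{n-1}$.
\item Higher orders bring in the $c_{n-2}$ and $c_{n-3}$ terms shown in (\ref{firstfew}).
\end{itemize}
The palindromic symmetry lets me discard the odd-order information, since for example $f'(1)=\frac n2f(1)$. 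So $A_i$ should be built from the moments of even order up to a bound depending on $i$. I would obtain the weight polynomials $w_i$ recursively in $i$ by successively killing the end coefficients $a_0,a_1,\ldots$ together with their mirror images. This is the recursive algorithm for $A_i$ referred to in the statement.

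\textbf{Step 3: the main obstacle.} The hard part is making Step 2 compatible with the positivity in Step 1. The combination of moments that is forced to vanish at $p<2i$ must remain strictly positive on $[2i,n-2i]$. It must also be expressible using only the Chern numbers allowed at that stage. I would first test the candidate weights on $i=1$, requiring vanishing at $p=0,1,n-1,n$ and positivity in between. I would match the result against $\frac{n(3n-5)}2c_n+c_1c_{n-1}$ to fix the normalization of the recursion. The $i=2$ case should then be checked against the third line of (\ref{firstfew}). If a pure polynomial weight fails to be positive, I would instead allow $w_i$ to be supported exactly on $[2i,n-2i]$ and use the HRR expansion to identify $\sum_pw_i(p)a_p$ with a Chern combination directly.

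Finally, I would evaluate at $c_j=\binom{n+1}{j}$ to get the right-hand side, which is legitimate because these are the Chern numbers of $\mathbb{P}^n$.
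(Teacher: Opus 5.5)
Your proposal has a genuine gap: the weights you require in Step 1 cannot represent the Chern number combinations in the statement.

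A functional $\sum_p w(p)a_p$ depends only on the symmetrization $\frac12[w(p)+w(n-p)]$. The $\chi_{-y}$-genera of the products $\mathbb{P}^k\times\mathbb{P}^{n-k}$ span all palindromic polynomials of degree $n$, so this symmetric weight is uniquely determined by the Chern number combination. The combinations in (\ref{firstfew}) are positive multiples of $K_{2i}=\sum_{p\geq 2i}\binom{p}{2i}a_p$. Their symmetric weight $\frac12[\binom{p}{2i}+\binom{n-p}{2i}]$ equals $\frac12\binom{n}{2i}\neq0$ at $p=0$. Hence for $i\geq1$ no weight that is symmetric and vanishes outside $[2i,n-2i]$ can produce them.

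Concretely, for $n=4$, $i=1$ one has $14c_4+c_1c_3=12(6a_0+3a_1+a_2)$, whereas your $w_1$ only gives a multiple of $a_2=\chi^2(M)$. For $n=2$, and more generally whenever $i>n/4$, your support $[2i,n-2i]$ is empty and you get no inequality at all. Yet the theorem asserts, for example, $(c_1^2+c_2)[M]\geq12$ when $n=2$.

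The equality step also fails. The interval $[2i,n-2i]$ is invariant under $p\mapsto n-p$, so palindromy tells you nothing about $p\in(n-2i,n]$. You would only obtain $\epsilon^na_p=1$ on $[2i,n-2i]$, not $\chi^p(M)=\epsilon^n(-1)^p$ for all $2i\leq p\leq n$.

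The missing observation is that no cancellation of end coefficients is needed, so the ``main obstacle'' of your Step 3 does not exist. The Taylor coefficients of $f$ at $y=1$, which you already mention in Step 2, are $\frac{f^{(j)}(1)}{j!}=\sum_{p\geq j}\binom{p}{j}a_p=(-1)^jK_j(M)$. The weights $\binom{p}{j}$ are nonnegative and strictly positive exactly for $p\geq j$. Taking $j=2i$ and using $\epsilon^na_p\geq1$ gives at once $\epsilon^nK_{2i}(M)\geq\sum_{p=2i}^n\binom{p}{2i}=\binom{n+1}{2i+1}=K_{2i}(\mathbb{P}^n)$. Equality holds if and only if $\epsilon^na_p=1$ for every $2i\leq p\leq n$, which is precisely the stated equality condition.

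The explicit formulas and the recursive algorithm for $K_{2i}$ come from the Libgober--Wood/Salamon expansion at $y=-1$ (Proposition \ref{ch}), and this yields (\ref{firstfew}). This is exactly the paper's argument, with $A_i:=\epsilon^nK_{2i}$.
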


\section{Proof of Theorem \ref{main}}\label{section3}
In this section we first recall an interesting property
of the $\chi_y$-genus, which the first author calls ``\emph{the $-1$-phenomenon}", and then apply it to prove Theorem \ref{main}.

\subsection{The $-1$-phenomenon}
When $n$ are small, the formulas of $\chi^p$ in terms of rational linear combinations of Chern numbers can be explicitly written down. For example when $n\leq 6$, $\chi^0$ are listed in \cite[p.14]{Hi66} . However for \emph{general} $n$ there are \emph{no explicit} formulas for these $\chi^p$. Nevertheless, as we have mentioned above, $\chi_y(M)\big|_{y=-1}=c_n[M]$. Note that $\chi_y(M)\big|_{y=-1}$ is precisely
the constant term when expanding the polynomial $\chi_y(M)$ at $y=-1$.
Indeed, several independent articles (\cite{NR}, \cite{LW},
\cite{Sa}) have noticed that, when
expanding the right-hand side of (\ref{HRR}) at $y=-1$, its first few coefficients for \emph{general $n$} have \emph{explicit} formulas in
terms of Chern numbers. To be more precise, regarding these coefficients we have the following facts.
\begin{proposition}\label{ch}
Let $K_j(M)$ $(0\leq j\leq n)$ be the coefficients in front of $(y+1)^j$ in the Taylor expansion of $\chi_y(M)$ at $y=-1$, i.e.,
\be\label{chiy-1}\int_M\prod_{i=1}^n\frac{x_i(1+ye^{-x_i})}{1-e^{-x_i}}
=:\sum_{j=0}^nK_j(M)\cdot(y+1)^j.\ee
So each $K_j$ is a rational linear combination of Chern numbers.
Then we have
\begin{enumerate}
\item
any $K_{2i+1}$ is a linear combination of the set $\{K_{2j}~|~0\leq j\leq i\}$ and so \emph{we are only interested in $K_{2i}$} for $0\leq i\leq[\frac{n}{2}]$,


\item
there is a recursive algorithm to determine the formulas $K_{2i}$, and

\item
the first few terms are given by
\begin{eqnarray}\label{firstfewterms}
\left\{ \begin{array}{ll}
K_0=c_{n},\\
~\\
K_1= -\frac{1}{2}nc_{n},\\
~\\
K_2=
\frac{1}{12}\Big[\frac{n(3n-5)}{2}c_{n}+c_{1}c_{n-1}\Big]\\
~\\
K_3=-\frac{1}{24}\Big[\frac{n(n-2)(n-3)}{2}c_{n}+
(n-2)c_{1}c_{n-1}\Big]\\
~\\
\begin{split}
K_4=\frac{1}{5760}\Big[&n(15n^{3}-150n^{2}+485n-502)c_{n}+
4(15n^{2}-85n+108)c_{1}c_{n-1}\\
&+8(c_{1}^{2}+3c_{2})c_{n-2}-8(c_{1}^{3}-3c_{1}c_{2}+3c_{3})c_{n-3}\Big].
\end{split}
\end{array} \right.
\end{eqnarray}
\end{enumerate}
\end{proposition}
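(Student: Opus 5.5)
The plan is to work directly with the generating function. Substitute $y=-1+t$, so that $1+ye^{-x}=(1-e^{-x})+te^{-x}$ and the factor becomes
$$\frac{x(1+ye^{-x})}{1-e^{-x}}=x+t\,\frac{x e^{-x}}{1-e^{-x}}=x+t\,\frac{x}{e^{x}-1}.$$
Expanding the product $\prod_{i=1}^n\big(x_i+t\,b(x_i)\big)$ with $b(x)=x/(e^x-1)$, the coefficient of $t^j$ is the elementary-type symmetric sum
$$\sum_{|S|=j}\ \prod_{i\notin S}x_i\prod_{i\in S}b(x_i).$$
Since $b(x)=1-\frac{x}{2}+\frac{x^2}{12}-\cdots$, only the degree-$n$ part of this sum survives integration. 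Hence $K_j$ is a finite combination of monomial symmetric functions of the form $\prod_{i\notin S}x_i\cdot(\text{low-degree terms in }x_i,\ i\in S)$. Each such expression can be rewritten as $c_{n-k}$ times a polynomial in $c_1,\dots,c_k$, using Newton-type identities for sums over complementary index sets.

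For item (1) I would use the duality (\ref{chiyduality}): $\chi_y=(-y)^n\chi_{1/y}$. Writing $y=-1+t$ and $1/y=-1+t'$ with $t'=t/(t-1)$, we get $(-y)^n=(1-t)^n$. Substituting the expansion $\sum K_j t^j$ on both sides and comparing coefficients gives linear relations
$$K_m=\sum_j K_j\,[t^m]\Big((1-t)^{n-j}(-t)^j\Big),$$
which say that the polynomial is invariant under an involution. The map $t\mapsto t/(t-1)$ is an involution fixing $0$, and $t^{j}$ goes to $(-1)^j t^j+\text{higher}$. So the relation for odd $m$ reads $2K_m=(\text{combination of }K_j,\ j<m)$, and by induction $K_{2i+1}$ is a combination of $K_0,\dots,K_{2i}$. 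The case $m=1$ gives $K_1=-\frac n2K_0$ and serves as a consistency check.

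For item (2), the recursive algorithm comes from the structure above. I would write $\log$ of the generating function, or equivalently set
$$x+tb(x)=x\,(1+t\,b(x)/x),$$
noting that $b(x)/x$ has a pole. So it is cleaner to handle the complementary-set sums directly. For a fixed $j$, expand $\prod_{i\in S}b(x_i)$ in power sums of the $x_i$ with $i\in S$. Power sums over $S$, multiplied by $\prod_{i\notin S}x_i$ and summed over $S$, are expressed recursively through the Chern classes $c_{n-k}c_\lambda$. This yields an algorithm in which $K_{2i}$ involves only $c_{n-k}\cdot(\text{degree }k)$ with $k\le 2i-1$. I expect this bookkeeping to be the main obstacle: getting a clean recursion valid for all $n$, rather than case by case.

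For item (3), I will compute directly. $K_2$ comes from $j=2$ together with terms from $|S|=1$. Evaluating each piece:
\begin{itemize}
\item $\sum_{|S|=1}x_1^2\prod_{i\ne1}x_i$ expands using $\sum_i x_i\cdot c_{n}/x_i$, which gives $c_1c_{n-1}-nc_n$-type expressions;
\item the pieces from $|S|=2$ are handled the same way.
\end{itemize}
Matching with the given formulas for $K_2$, $K_3$ and $K_4$ is then routine but lengthy. As a check, the formula for $K_3$ should be recovered from $K_0$ and $K_2$ via the duality relations of (1).
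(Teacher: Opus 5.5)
Your proof of (1) is correct and complete. With $t'=t/(t-1)$ and $(-y)^n=(1-t)^n$, comparing coefficients gives $K_m=(-1)^m\sum_{j\le m}\binom{n-j}{m-j}K_j$. For odd $m$ this reads
\[
2K_m=-\sum_{j<m}\binom{n-j}{m-j}K_j .
\]
This yields $K_1=-\frac n2 c_n$, and from $K_0,K_1,K_2$ it gives exactly the stated $K_3$. The paper does not prove (1); it only cites \cite{Li17}, and your use of (\ref{chiyduality}) is a self-contained replacement.

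The gap is in (2) and in the general-$n$ formulas of (3).

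\emph{What is left open.} For a fixed $n$, every degree-$n$ symmetric polynomial reduces to Chern numbers, so the existence of ``an algorithm'' is trivial. The real content is uniformity in $n$. The claim is that $K_j$ is, for all $n$ at once, a combination of $c_{n-k}c_\lambda$ with $|\lambda|=k\le j-1$ and coefficients polynomial in $n$; this is what produces coefficients such as $n(15n^3-150n^2+485n-502)$ in $K_4$. You assert this shape but explicitly leave the mechanism open. The subset-sum route does not supply it readily: already for $\sum_{a<b}x_a^2x_b^2\,e_{n-4}(x_{\widehat{ab}})$, which enters $K_4$, the complementary-sum reduction produces negative power sums that only cancel at the end. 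As a result, $K_4$ is never actually checked.

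\emph{The $K_2$ sketch.} It is garbled. Only $|S|=2$ contributes to $t^2$, and the $|S|=1$ sum you wrote has degree $n+1$. The correct computation uses $[b(x_a)b(x_b)]_2=\frac{x_a^2+x_b^2}{12}+\frac{x_ax_b}{4}$ and $\sum_{a\ne b}x_a^2\prod_{i\ne a,b}x_i=c_1c_{n-1}-nc_n$. Together these give $K_2=\frac1{12}(c_1c_{n-1}-nc_n)+\frac14\binom n2 c_n$, which is the stated formula.

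\emph{The missing idea.} You need a reparametrization that isolates the factors $c_{n-k}$. You were close when you tried to factor out $x$; the right factor is $x+z$. Put $t=1-e^{-z}$, i.e.\ $y=-e^{-z}$. Then
\[
x+t\,b(x)=\frac{x(e^x-e^{-z})}{e^x-1}=e^{-z}(x+z)\frac{b(x)}{b(x+z)},
\]
and hence
\[
\chi_{-e^{-z}}(M)=e^{-nz}\int_M\Big(\sum_{k=0}^n c_{n-k}z^k\Big)\exp\Big(-\sum_{m\ge1}\frac{z^m}{m!}\sum_{i=1}^n(\log b)^{(m)}(x_i)\Big).
\]
Each $\sum_i(\log b)^{(m)}(x_i)$ equals $n(\log b)^{(m)}(0)$ plus a combination of power sums $p_r$, which Newton's identities convert into $c_1,\dots,c_r$. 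Taking the degree-$n$ part pairs $c_{n-k}$ with a weight-$k$ polynomial. Each $p_r$ carries at least one power of $z$, so only $k\le j-1$ (besides $k=0$) occurs in the $z^j$-coefficient.

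Since $t=1-e^{-z}$ is a universal triangular change of variables, and $e^{-nz}$ only contributes coefficients polynomial in $n$, the $K_j$ inherit this form. This gives a genuine recursion valid for all $n$, and running it to order four yields $K_4$. Two sanity checks: the $K_4$ formula vanishes for $n\le 3$, and it equals $\binom{n+1}{5}$ on $\mathbb{P}^n$. Otherwise, you should do what the paper does for (2) and (3) and cite \cite{LW}, \cite{Sa}, \cite{De}.
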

\begin{proof}
We refer to \cite[Lemma 2.1]{Li17} for the proof of $(1)$. The recursive algorithm for calculating $K_{j}$ has been described in \cite[p.144]{LW}.
The formulas for $K_j$ up to $j=9$ can be found in \cite[p.141-143]{LW}, \cite[p.145]{Sa} and \cite{De}.
\end{proof}
\begin{remark}
Among (\ref{firstfewterms}) the formula $K_2$ is extremely useful. For instance, Narasimhan-Ramanan applied it to give a topological restriction on some moduli
spaces of stable vector bundles over Riemann surfaces (\cite[p.18]{NR}). Libgober-Wood applied it in \cite{LW} to prove the
uniqueness of the complex structure on K\"{a}hler manifolds of
certain homotopy types, which were further refined in \cite{De} using the same ideas. Salamon applied it to obtain a restriction on the Betti numbers of hyperK\"{a}hler
manifolds (\cite[Coro. 3.4, Thm 4.1]{Sa}). Very recently, Peternell and the first author also applied it to solve a long-standing conjecture of Fujita concerning the uniqueness on smooth K\"{a}hler compactification of contractible complex manifolds (\cite{LP}).
\end{remark}

\subsection{Proof of Theorem \ref{main}}
As in Theorem \ref{main}, let $\epsilon=\pm1$ corresponding to $\chi$-positivity or signed $\chi$-positivity. By (\ref{chiy-1}) we have
\be
\epsilon^n\sum_{j=0}^n(-1)^jK_j(M)(y-1)^j=
\epsilon^n\chi_{-y}(M)=
\epsilon^n\sum_{p=0}^n(-1)^p\chi^p(M)y^p,
\nonumber\ee
which, for each $0\leq j\leq n$, yields
\be\label{1}
\epsilon^n(-1)^jK_j(M)=\sum_{p=j}^n
\epsilon^n(-1)^p\chi^p(M){p\choose j}.
\ee

Due to Definition \ref{def} the condition of $\chi$-positivity or signed $\chi$-positivity implies that
$$\epsilon^n(-1)^p\chi^p(M)\geq1,\quad 0\leq p\leq n.$$
Therefore (\ref{1}) implies that
\be\label{2}\begin{split}
\epsilon^n(-1)^jK_j(M)\geq\sum_{p=j}^n
{p\choose j}=&\frac{\Big[\sum_{p=0}^ny^p\Big]^{(j)}}
{j!}\Big|_{y=1}\\
=&\frac{\Big[\chi_{-y}(\mathbb{P}^n)\Big]^{(j)}}
{j!}\Big|_{y=1}\quad\big(\text{by (\ref{0})}\big)\\
=&(-1)^jK_j(\mathbb{P}^n).
\end{split}
\ee

Define
$$A_i(c_1,\ldots,c_n):=\epsilon^nK_{2i},\qquad 0\leq i\leq[\frac{n}{2}].$$
It follows from (\ref{2}) that
\be
\begin{split}
A_i(c_1,\ldots,c_n)[M]\geq&\epsilon^nA_{i}
(c_1,\ldots,c_n)[\mathbb{P}^n]\\
=&\epsilon^nA_i\Big({n+1\choose 1},\ldots,{n+1\choose n}\Big),\qquad 0\leq i\leq[\frac{n}{2}].\end{split}\nonumber\ee
This establishes the inequalities (\ref{chernnumberinequality}) in Theorem \ref{main}. The characterization of the equality case (\ref{firstfew}) follows from (\ref{1}) and (\ref{2}). The three Chern number inequalities (\ref{firstfew}) then follow from (\ref{firstfewterms}).

\section{Applications to K\"{a}hler manifolds}\label{section4}
We shall apply Theorem \ref{main} in this section to various K\"{a}hler manifolds satisfying either $\chi$-positivity or signed $\chi$-positivity.
\subsection{K\"{a}hler manifolds with $\chi$-positivity}
Our definition for the $\chi$-positivity is motivated by the following
\begin{definition}
A K\"{a}hler manifold $M$ is called of \emph{pure type} if its Hodge numbers $h^{p,q}(M)=0$ whenever $p\neq q$. In other words, $M$ is of pure type if and only if its nonzero Hodge numbers concentrate only on the vertical line of the Hodge diamond (\cite[p.117]{GH}).
\end{definition}

The following facts are straightforward.
\begin{lemma}
A K\"{a}hler manifold $M$ of pure type satisfies
\be(-1)^p\chi^p(M)=h^{p,p}(M)=b_{2p}(M)\geq1,\nonumber\ee
and hence is $\chi$-positive, where $b_{2p}(M)$ is the $2p$-th Betti number of $M$. Moreover, the odd-dimensional Betti numbers are all zero. In this case the $\chi_{-y}$-genus is essentially the Poincar\'{e} polynomial:
\be\label{4}\chi_{-y}(M)=\sum_{p=0}^nb_{2p}(M)y^p.\ee
\end{lemma}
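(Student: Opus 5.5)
The plan is to read the whole statement off the Hodge decomposition together with formula (\ref{Hodgenumber}). Since $M$ is K\"{a}hler, hence integrable, we have $\chi^p(M)=\sum_{q=0}^n(-1)^qh^{p,q}(M)$. The pure type hypothesis kills every term with $q\neq p$, so $\chi^p(M)=(-1)^ph^{p,p}(M)$, that is, $(-1)^p\chi^p(M)=h^{p,p}(M)$.

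Next we identify the Hodge numbers with Betti numbers. By the Hodge decomposition $b_k(M)=\sum_{p+q=k}h^{p,q}(M)$. For $k$ odd every pair with $p+q=k$ has $p\neq q$, so $b_k(M)=0$. For $k=2p$ the only surviving term is $h^{p,p}(M)$, so $b_{2p}(M)=h^{p,p}(M)$.

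For the lower bound $b_{2p}(M)\geq1$ when $0\leq p\leq n$, let $\omega$ be the K\"{a}hler form. Then $\omega^n$ is a volume form, so $\int_M\omega^n>0$. Hence $[\omega]^p\neq0$ in $H^{2p}(M;\mathbb{R})$ for each $p$, since otherwise $[\omega]^n=[\omega]^p\cdot[\omega]^{n-p}=0$. This gives $h^{p,p}(M)=b_{2p}(M)\geq1$, and therefore $M$ is $\chi$-positive in the sense of Definition \ref{def}.

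Finally, substituting $(-1)^p\chi^p(M)=b_{2p}(M)$ into $\chi_{-y}(M)=\sum_p(-1)^p\chi^p(M)y^p$ yields (\ref{4}). No step is a real obstacle. The only point needing an argument beyond bookkeeping is the nonvanishing of $[\omega]^p$, which follows from the positivity of $\int_M\omega^n$.
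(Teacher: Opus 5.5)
Your proposal is correct and follows the paper's proof. Pure type collapses $\chi^p(M)=\sum_q(-1)^qh^{p,q}(M)$ to $(-1)^ph^{p,p}(M)$, and the Hodge decomposition gives $h^{p,p}(M)=b_{2p}(M)$ and the vanishing of the odd Betti numbers; your one addition is the explicit justification of $b_{2p}(M)\geq1$ via $[\omega]^p\neq0$ (from $\int_M\omega^n>0$), which the paper leaves implicit here.
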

\begin{proof}
In this case
$$\chi^p(M)=\sum_{q=0}^n(-1)^qh^{p,q}(M)=(-1)^ph^{p,p}(M).$$
By the Hodge decomposition we have $h^{p,p}(M)=b_{2p}(M)\geq 1$, and all the odd-dimensional Betti numbers vanish.
\end{proof}

Below we collect some K\"{a}hler manifolds of pure type, which are to the authors' best knowledge.
\begin{example}\label{example}
K\"{a}hler manifolds of pure type include the following examples, which are automatically $\chi$-positive.
\begin{enumerate}
\item
A rational homogeneous manifold is of the form $G/P$, where $G$ is a semisimple complex Lie group and $P$ a parabolic subgroup. It is well-known that these $G/P$ are of pure type (\cite[\S14.10]{BH}). When $G=SL(n+1,\mathbb{C})$ (the $A_n$-type), such manifolds are called complex flag manifolds (\cite[\S9]{Fu2}). In particular, for a \emph{maximal} parabolic subgroup $P_{\text{max}}\subset SL(n+1,\mathbb{C})$, $SL(n+1,\mathbb{C})/P_{\text{max}}$ are complex Grassmannians including the complex projective space $\mathbb{P}^n$.

\item
Smooth projective toric varieties are of pure type (\cite[p.106]{Fu1}).

\item
A Fano contact manifold is defined to be a complex manifold which is both complex contact (\cite[p.115]{LS}) and Fano (the first Chern class is positive or equivalently, the anti-canonical bundle is ample). Fano contact manifolds turn out to be of pure type (\cite[p.118]{LS}).

\item
Let $S$ be a projective surface and $S^{[m]}$ the Hilbert scheme of closed
0-dimensional subschemes of length $m$ on $S$. Then $S^{[m]}$ is of pure type if and only if $S$ is of pure type (\cite[\S5]{Sa}).
\end{enumerate}
\end{example}

Applying Theorem \ref{main} to K\"{a}hler manifolds of pure type yields
\begin{theorem}\label{mainapp1}
Let $M$ be a K\"{a}hler manifold of pure type. Then it satisfies optimal Chern number inequalities
\be\label{chernnumberinequality2}
\begin{split}
A_i(c_1,\ldots,c_n)[M]&\geq A_i\Big({n+1\choose 1},\ldots,{n+1\choose n}\Big)\\
&=A_{i}
(c_1,\ldots,c_n)[\mathbb{P}^n],\qquad0\leq i\leq[\frac{n}{2}],
\end{split}\nonumber\ee
where the $i$-th equality case occurs if and only if
\be\label{equalitycase1}b_{2p}(M)=h^{p,p}(M)=1,\qquad 2i\leq p\leq n.\nonumber\ee
In particular,
\begin{eqnarray}
\left\{ \begin{array}{ll}
c_n[M]\geq n+1,\\
~\\
\big[\frac{n(3n-5)}{2}c_{n}+c_{1}c_{n-1}\big][M]
\geq2(n-1)n(n+1).
\end{array} \right.\nonumber
\end{eqnarray}
\end{theorem}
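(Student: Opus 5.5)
The plan is to read Theorem \ref{mainapp1} as the $\chi$-positive case ($\epsilon=1$) of Theorem \ref{main}, and then translate both the hypothesis and the equality condition into Hodge and Betti numbers using the preceding lemma on manifolds of pure type.

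First, I will verify the hypothesis. By the lemma, a K\"{a}hler manifold $M$ of pure type satisfies
\[(-1)^p\chi^p(M)=h^{p,p}(M)=b_{2p}(M)\geq1,\qquad 0\leq p\leq n.\]
The lower bound holds because $[\omega]^p\neq0$ in $H^{2p}(M)$ for the K\"{a}hler class $[\omega]$. So $M$ is $\chi$-positive in the sense of Definition \ref{def}, and we take $\epsilon=1$. Since $\epsilon^n=1$, the inequalities (\ref{chernnumberinequality}) of Theorem \ref{main} become exactly
\[A_i(c_1,\ldots,c_n)[M]\geq A_i(c_1,\ldots,c_n)[\mathbb{P}^n],\qquad 0\leq i\leq[\tfrac n2].\]

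Second, I will handle the equality case. By Theorem \ref{main}, equality in the $i$-th inequality holds if and only if $\chi^p(M)=(-1)^p$ for $2i\leq p\leq n$. Via the lemma this condition is equivalent to $h^{p,p}(M)=b_{2p}(M)=1$ in the same range, which is the stated characterization.

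Third, the two displayed inequalities are the $i=0$ and $i=1$ cases of (\ref{firstfew}) with $\epsilon^n=1$. The $i=1$ case needs no extra constant computation, since (\ref{firstfew}) already gives the right-hand side $2(n-1)n(n+1)$. As a sanity check, $c_1c_{n-1}[\mathbb{P}^n]=(n+1)^2n/2$ and $c_n[\mathbb{P}^n]=n+1$. Then
\[\tfrac{n(3n-5)}{2}(n+1)+\tfrac{n(n+1)^2}{2}=\tfrac{n(n+1)(4n-4)}{2}=2(n-1)n(n+1),\]
as required.

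There is no real obstacle here. The only point needing care is the equivalence $(-1)^p\chi^p=b_{2p}$, which uses both the pure-type hypothesis and the Hodge decomposition, and both are already supplied by the lemma.
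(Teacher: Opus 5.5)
Your proposal is correct and follows the paper's argument: the pure-type lemma gives $(-1)^p\chi^p(M)=h^{p,p}(M)=b_{2p}(M)\geq 1$, so $M$ is $\chi$-positive, and Theorem~\ref{main} with $\epsilon=1$ yields both the inequalities and the equality condition $\chi^p(M)=(-1)^p$, which is the same as $b_{2p}(M)=h^{p,p}(M)=1$ for $2i\leq p\leq n$. Your check of the constant $2(n-1)n(n+1)$, using $c_1c_{n-1}[\mathbb{P}^n]=n(n+1)^2/2$, is also correct.
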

\begin{remark}
Theorem \ref{mainapp1} is applicable to the manifolds mentioned in Example \ref{example}, which include rational homogeneous manifolds $G/P$. Indeed Chern numbers of $G/P$ have very specific properties. For example, among all homogenous complex manifolds (not necessarily K\"{a}hler), these $G/P$ can be characterized by the sign of their Chern numbers (\cite[Thm 2.7]{Li25}).
\end{remark}

\subsection{Signed $\chi$-positive K\"{a}hler manifolds}
Our central motivation to signed $\chi$-positivity arises from K\"{a}hler hyperbolic manifolds in the sense of Gromov, who introduced this concept in \cite{Gr} to attack the Hopf conjecture for K\"{a}hler manifolds with negative Riemannian sectional curvature.

Let $(M,\omega)$ be a K\"{a}hler manifold with $\omega$ the K\"{a}hler form, and $$(\widetilde{M},\widetilde{\omega})\overset{\pi}{\longrightarrow}(M,\omega)$$
the universal covering with $\widetilde{\omega}=\pi^{\ast}(\omega)$. $(M,\omega)$ is called \emph{K\"{a}hler hyperbolic} (\cite[p.265]{Gr}) if the two-form $\widetilde{\omega}$ is bounded on $(\widetilde{M},\widetilde{\omega})$ as a differential form. A K\"{a}hler manifold is called \emph{K\"{a}hler hyperbolic} if there exists a K\"{a}hler form on it satisfying the above-mentioned property. Clearly this definition is interesting only if the universal covering $\widetilde{M}$ is non-compact.

\begin{example}\label{example2}
Typical examples of K\"{a}hler hyperbolic manifolds include (\cite[p.265]{Gr}, \cite[\S2.2]{CY})
\begin{enumerate}
\item
K\"{a}hler manifolds which are homotopy equivalent to negatively-curved Riemannian manifolds,

\item
Compact quotients of the bounded homogeneous symmetric
domains in $\mathbb{C}^n$,

\item
submanifolds of K\"{a}hler hyperbolic manifolds, and

\item
the products of K\"{a}hler hyperbolic manifolds.
\end{enumerate}
\end{example}

The following fact is a corollary of Gromov's vanishing theorem for K\"{a}hler hyperbolic manifolds.
\begin{lemma}
K\"{a}hler hyperbolic manifolds are signed $\chi$-positive.
\end{lemma}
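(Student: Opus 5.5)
The plan is to deduce the lemma from Gromov's $L^2$-vanishing theorem combined with Atiyah's $L^2$-index theorem. Let $(M,\omega)$ be K\"{a}hler hyperbolic and $\pi:\widetilde{M}\to M$ its universal covering, with deck group $\Gamma=\pi_1(M)$.

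\emph{Step 1: identify $\chi^p(M)$ with an $L^2$-index.} For each $0\leq p\leq n$, lift the Dolbeault-type operator $D_p$ of (\ref{GDC}) to $\widetilde{M}$. Atiyah's $L^2$-index theorem states that the $\Gamma$-index of the lifted operator equals the ordinary index $\chi^p(M)$. Since $\widetilde{M}$ is K\"{a}hler with a $\Gamma$-invariant metric, the $\Gamma$-index is the alternating sum of the von Neumann dimensions of the $L^2$-harmonic spaces. This gives
$$\chi^p(M)=\sum_{q=0}^n(-1)^q\,h^{p,q}_{(2)}(\widetilde{M}),$$
where $h^{p,q}_{(2)}$ denotes the $\Gamma$-dimension of the space of $L^2$-harmonic $(p,q)$-forms on $\widetilde{M}$.

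\emph{Step 2: apply Gromov's theorem.} Because $\widetilde{\omega}$ is $d$-bounded, Gromov's vanishing theorem (\cite{Gr}) states that $L^2$-harmonic forms on $\widetilde{M}$ exist only in middle degree $p+q=n$, and that $h^{p,n-p}_{(2)}(\widetilde{M})>0$ for every $0\leq p\leq n$. Gromov proves the nonvanishing by a Lefschetz-type argument: $L^2$-cohomology is nontrivial because the $L^2$-index equals the Euler characteristic up to sign, and nonvanishing in every bidegree $(p,n-p)$ follows from his analysis of the lowest-degree piece. Substituting into Step 1, only the term $q=n-p$ survives, so
$$\chi^p(M)=(-1)^{n-p}h^{p,n-p}_{(2)}(\widetilde{M}).$$
Hence $(-1)^{n+p}\chi^p(M)=h^{p,n-p}_{(2)}(\widetilde{M})>0$. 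Since $\chi^p(M)$ is an integer, this gives $(-1)^{n+p}\chi^p(M)\geq1$, which is exactly signed $\chi$-positivity in the sense of Definition \ref{def}.

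\emph{Main obstacle.} The delicate point is the \emph{strict} positivity of every $h^{p,n-p}_{(2)}$. Vanishing outside middle degree follows routinely from the spectral gap produced by the bounded primitive of $\widetilde{\omega}$. Nonvanishing, however, is the deeper part of Gromov's theorem (\cite[Thm 2.5]{Gr}). I will invoke it directly rather than reprove it. If one wanted a self-contained argument, one would follow Gromov: construct nonzero $L^2$ holomorphic $n$-forms by twisting with a sufficiently high power of the line bundle with curvature $\widetilde{\omega}$, then pass to $(p,n-p)$-forms using the hard Lefschetz isomorphism on $L^2$-cohomology.
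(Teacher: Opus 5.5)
Your argument is correct and is essentially the paper's proof: Atiyah's $L^2$-index theorem gives $\chi^p(M)=\sum_q(-1)^q h^{p,q}_{(2)}$, and Gromov's theorem (vanishing off the middle degree, strict positivity in bidegree $(p,n-p)$) leaves $\chi^p(M)=(-1)^{n-p}h^{p,n-p}_{(2)}$. Your side remarks on how Gromov obtains non-vanishing are inaccurate, although this does not affect the proof since you, like the paper, simply cite Gromov for it: hard Lefschetz raises bidegree by $(1,1)$ and cannot carry $(n,0)$-classes to $(p,n-p)$-classes; inferring non-triviality from the Euler characteristic is circular, because $(-1)^n c_n[M]>0$ is a consequence of Gromov's theorem; and Gromov's actual argument is that, for each $p$, a twisted $L^2$-index is a nonzero polynomial in the twisting parameter, which contradicts the spectral gap that $h^{p,n-p}_{(2)}=0$ would force.
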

\begin{proof}
Here we only sketch the proof. More details and notions mentioned here can be found in \cite[\S4]{Li19} and the references therein.

Let $h^{p,q}_{(2)}(M)$ be the $L^2$-Hodge numbers of a K\"{a}hler hyperbolic manifold $M$. The $L^2$-index theorem of Atiyah (\cite[Thm 3.8]{At}) asserts that
\be\label{5}\chi^p(M)=\sum_{q=0}^n(-1)^qh^{p,q}_{(2)}(M),\ee
which indeed holds true for all complex manifolds (see \cite[(4.3)]{Li19}). Gromov's vanishing theorem for K\"{a}hler hyperbolic manifolds implies that (\cite[p.283]{Gr}) \begin{eqnarray}
\left\{ \begin{array}{ll}
h^{p,q}_{(2)}(M)=0,\qquad p+q\neq n,\\
~\\
h^{p,q}_{(2)}(M)>0,\qquad p+q=n.
\end{array} \right.\nonumber
\end{eqnarray}
This, together with (\ref{5}), leads to $$\chi^p(M)=(-1)^{n-p}h^{p,n-p}_{(2)}(M),$$ and therefore
$$(-1)^{n+p}\chi^p(M)=h^{p,n-p}_{(2)}(M)>0.$$
\end{proof}

Now we apply Theorem \ref{main} to signed $\chi$-positive K\"{a}hler manifolds to yield the following Chern number inequalities.
\begin{theorem}\label{mainapp2}
Let $M$ be a signed $\chi$-positive K\"{a}hler manifold. Then it satisfies optimal Chern number inequalities
\be\label{5.5}
\begin{split}
A_i(c_1,\ldots,c_n)[M]&\geq(-1)^nA_i\Big({n+1\choose 1},\ldots,{n+1\choose n}\Big)\\
&=(-1)^nA_{i}
(c_1,\ldots,c_n)[\mathbb{P}^n],\qquad0\leq i\leq[\frac{n}{2}],
\end{split}\ee
where the $i$-th equality case occurs if and only if
\be\label{equalitycase1}\chi^p(M)=(-1)^{n+p},\qquad 2i\leq p\leq n.\nonumber\ee
In particular,
\begin{eqnarray}\label{6}
\left\{\begin{array}{ll}
(-1)^nc_n[M]\geq n+1,\\
~\\
(-1)^n\big[\frac{n(3n-5)}{2}c_{n}+c_{1}c_{n-1}\big][M]
\geq2(n-1)n(n+1).
\end{array} \right.
\end{eqnarray}
\end{theorem}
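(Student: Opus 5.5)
This is a direct specialization of Theorem \ref{main} to the case $\epsilon=-1$. Let $M$ be a signed $\chi$-positive K\"{a}hler manifold. By Definition \ref{def} this means $(-1)^{n+p}\chi^p(M)>0$ for all $0\leq p\leq n$. Since the $\chi^p(M)$ are integers (indices of the operators $D_p$), this gives $(-1)^{n+p}\chi^p(M)\geq 1$, which is exactly the input used in the proof of Theorem \ref{main}. Integrability of $J$ is not needed beyond the fact that $M$ is almost-complex; the K\"{a}hler hypothesis only matters for producing examples, such as the K\"{a}hler hyperbolic manifolds of the preceding lemma.

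The plan has three steps.

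\begin{enumerate}
\item Substitute $\epsilon=-1$, so $\epsilon^n=(-1)^n$, into (\ref{chernnumberinequality}). This gives (\ref{5.5}) verbatim, with $A_i:=(-1)^nK_{2i}$ as defined in the proof of Theorem \ref{main}.
\item Obtain the equality characterization from (\ref{equalitycase1}) with $\epsilon^n=(-1)^n$. It reads $\chi^p(M)=(-1)^n(-1)^p=(-1)^{n+p}$ for $2i\leq p\leq n$. Concretely, by (\ref{1}) with $j=2i$, the quantity $(-1)^nK_{2i}(M)-(-1)^nK_{2i}(\mathbb{P}^n)$ equals $\sum_{p\geq 2i}{p\choose 2i}\big((-1)^{n+p}\chi^p(M)-1\big)$. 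Every summand is nonnegative with a positive binomial coefficient, so the sum vanishes exactly when each $(-1)^{n+p}\chi^p(M)=1$.
\item Read off the two explicit inequalities (\ref{6}) from the first two lines of (\ref{firstfew}) with $\epsilon^n=(-1)^n$. The first is $K_0=c_n$ and $c_n[\mathbb{P}^n]=n+1$. For the second, use $K_2$ from (\ref{firstfewterms}) and compute its value on $\mathbb{P}^n$, namely $\frac{n(3n-5)}{2}(n+1)+(n+1)\binom{n+1}{2}$. This simplifies to $(n+1)\cdot\frac{3n^2-5n+n^2+n}{2}=(n+1)(2n^2-2n)=2(n-1)n(n+1)$. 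The factor $\frac{1}{12}$ is positive and cancels on both sides.
\end{enumerate}

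Optimality is witnessed by any $M$ for which the equality conditions hold. There is no real obstacle. The only point requiring care is the sign bookkeeping for $\epsilon^n$ on both sides, and the arithmetic check of the right-hand side $2(n-1)n(n+1)$ in (\ref{6}).
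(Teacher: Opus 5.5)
Your proposal is correct and follows the paper's route exactly: the paper obtains Theorem \ref{mainapp2} by specializing Theorem \ref{main} to $\epsilon=-1$. Your check that $\big[\frac{n(3n-5)}{2}c_n+c_1c_{n-1}\big][\mathbb{P}^n]=2(n-1)n(n+1)$ is also right. There is one sign slip in your ``concretely'' verification of the equality case, which is the bookkeeping you flagged: by (\ref{1}) and (\ref{2}), the quantity equal to $\sum_{p\geq 2i}\binom{p}{2i}\big((-1)^{n+p}\chi^p(M)-1\big)$ is $(-1)^nK_{2i}(M)-K_{2i}(\mathbb{P}^n)=A_i[M]-(-1)^nA_i[\mathbb{P}^n]$, not $(-1)^nK_{2i}(M)-(-1)^nK_{2i}(\mathbb{P}^n)$ as you wrote, and your version fails for odd $n$.
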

\begin{remark}\label{remark}
\begin{enumerate}
\item
In the case of K\"{a}her hyperbolic manifolds, Theorem \ref{mainapp2} have been obtained in \cite[Thm 2.1]{Li19}, by which the signed $\chi$-positivity in Definition \ref{def} is mainly motivated.

\item
As mentioned in \cite[Thm 2.1]{Li19}, all the quality cases in (\ref{5.5}) holds if $M$ is a compact quotient of the unit ball in $\mathbb{C}^n$ with $\chi^0(M)=(-1)^n$, which is an application of Hirzebruch's proportionality principle (see \cite[\S3.3]{Li19}).

\item
A well-known conjecture, which is usually attributed to Hopf, asserts that the signed Euler characteristic of a Riemannian manifold with negatively-curved curvature is positive. The Hopf Conjecture is widely open in its generality. Gromov applied his aforementioned vanishing theorem to solve the Hopf Conjecture for K\"{a}hler manifolds in \cite{Gr}. We stress that the first inequality in (\ref{6}) is exactly an improved version of the inequality expected by the Hopf Conjecture.
\end{enumerate}
\end{remark}

It turns out that the first Chern class of a K\"{a}hler hyperbolic manifold $M$ is negative (\cite[Thm 2.11]{CY}). Hence the classical Miyaoka-Yau Chern number inequality reads (\cite[Thm 4]{Ya}):
\be\label{7}c_2(-c_1)^{n-2}[M]\geq\frac{n}{2(n+1)}(-c_1)^n[M],\ee
where the equality case in (\ref{7}) occurs if and only if $M$ is a compact quotient of the unit ball $\mathbb{C}^n$. Theorem \ref{mainapp2} says that there are more optimal Chern number inequalities for such manifolds. In particular, when $n=2$, (\ref{6}) and (\ref{7}) yield, for a K\"{a}hler hyperbolic surface $S$, that
\be\label{8}c_2[S]\geq3,\quad (c_2+c_1^2)[S]\geq12,\quad\text{and}\quad 3c_2[S]\geq c_1^2[S].\ee

Note that the first one in (\ref{8}) is redundant as it can be deduced from the last two in (\ref{8}). By Part $(2)$ in Remark \ref{remark} and the equality characterization of (\ref{7}), the equality cases in (\ref{8}) occur if $S$ is a compact quotient of the unit ball in $\mathbb{C}^2$ with $\chi^0(S)=1$. Such surfaces are precisely called \emph{fake projective planes} and have been classified by Prasad-Yeung (\cite{PY}).
These two inequalities
seems to be interesting on its own and so we record them in the following
\begin{corollary}
A K\"{a}hler hyperbolic surface $S$ satisfies two optimal Chern number inequalities
\begin{eqnarray}\left\{ \begin{array}{ll}
3c_2[S]\geq c_1^2[S]\\
~\\
(c_2+c_1^2)[S]\geq12,
\end{array} \right.\nonumber
\end{eqnarray}
where the two equality cases can be achieved by the fake projective planes.
\end{corollary}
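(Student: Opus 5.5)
The plan is to specialize the general inequalities (\ref{6}) of Theorem \ref{mainapp2} and the Miyaoka--Yau inequality (\ref{7}) to $n=2$. A K\"{a}hler hyperbolic manifold is signed $\chi$-positive by the lemma above, so Theorem \ref{mainapp2} applies to $S$ with $\epsilon=-1$ and $(-1)^n=1$.

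For the first inequality, I would use that $c_1$ of a K\"{a}hler hyperbolic manifold is negative \cite[Thm 2.11]{CY}, so (\ref{7}) applies. With $n=2$ we have $(-c_1)^{n-2}=1$ and $\frac{n}{2(n+1)}=\frac13$. Thus (\ref{7}) reads $c_2[S]\ge\frac13 c_1^2[S]$, i.e.\ $3c_2[S]\ge c_1^2[S]$.

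For the second inequality, I would put $n=2$ into the second line of (\ref{6}). Since $\frac{n(3n-5)}{2}=1$ and $c_{n-1}=c_1$, the left side is $(c_2+c_1^2)[S]$. The right side is $2(n-1)n(n+1)=12$. As a sanity check, Noether's formula gives $K_2=\frac1{12}(c_2+c_1^2)=\chi^0$, so the inequality is just $\chi^0(S)\ge1$. This agrees with signed $\chi$-positivity, since $(-1)^{2+0}\chi^0>0$.

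For optimality, take $S$ a fake projective plane, i.e.\ a ball quotient with $\chi^0(S)=1$. Equality in (\ref{7}) holds exactly for ball quotients, so $c_1^2=3c_2$. Then Noether's formula gives $c_2+c_1^2=12$, hence $c_2=3$ and $c_1^2=9$, and both inequalities are equalities. Such surfaces exist by Prasad--Yeung \cite{PY}. Alternatively, Part (2) of Remark \ref{remark} gives equality in (\ref{5.5}) directly.

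The only point needing care is making sure the hypotheses of (\ref{7}) hold, namely negativity of $c_1$; I take this from \cite{CY}. Everything else is substitution.
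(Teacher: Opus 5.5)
Your proposal is correct and follows the paper's argument: specialize the second inequality in (\ref{6}) and the Miyaoka--Yau inequality (\ref{7}) to $n=2$, then get equality for fake projective planes from the ball-quotient equality case of (\ref{7}) together with $\chi^0(S)=1$. Your Noether-formula check ($K_2=\chi^0$, which forces $c_2=3$ and $c_1^2=9$) makes explicit what the paper obtains by citing Part (2) of Remark \ref{remark}.
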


\section{Applications to symplectic manifolds}\label{section5}
We make the convention in the sequel that all circle actions on almost-complex manifolds are nontrivial and smooth, and preserve the almost-complex structures. We usually denote by $S^1$, an $S^1$-manifold, or $M^{S^1}$ respectively the circle, a manifold equipped with a circle action, or the fixed point set of an $S^1$-manifold $M$.

In this section we shall apply Theorem \ref{main} to symplectic manifolds admitting symplectic $S^1$-action. To this end, we first recall the equivariant $\chi_{-y}$-genus for almost-complex $S^1$-manifolds and the Poincar\'{e} polynomial for symplectic $S^1$-manifolds, and then put them together to yield the desired results.
\subsection{Almost-complex $S^1$-manifolds and the $\chi_{-y}$-genus}\label{section5.1}
Assume that $M=(M,J)$ is an almost-complex $S^1$-manifold whose fixed point set $M^{S^1}$ is \emph{nonempty}. Choose an $S^1$-invariant almost-Hermitian metric on $M$. As is well-known $M^{S^1}$ consists of finitely many connected components
and each one is an almost-Hermitian submanifold of $M$. Moreover, the normal bundle of each connected component in
$M^{S^1}$ splits into a sum of complex line
bundles with respect to this $S^1$-action. Let $F\subset M^{S^1}$ be any such a connected component with complex
dimension $r$, where $r$ of course depends on the choice of $F$. As complex irreducible representations of $S^1$ are all one-dimensional, the normal bundle of $F$ in $M$, denoted by
$\nu(F)$, can be decomposed into a sum of $n-r$ complex line bundles
$$\nu(F)=\bigoplus_{i=1}^{n-r}L(F,k_i),\qquad k_i\in\mathbb{Z}-\{0\},$$
such that the action of the element $g\in S^1$ on the line bundle $L(F,k_i)$ is given by multiplying $g^{k_i}$. These $k_1,\ldots,k_{n-r}$ are usually called the \emph{weights} at $F$ with respect to this $S^1$-action. Note that these $k_i$ are
counted with multiplicities and thus are not necessarily mutually distinct. Note also that these weights are actually
independent of the almost-Hermitian metric we choose and
completely determined by the $S^1$-action. Define
\be\label{9}d_F:=\sharp\{i~|~k_i<0,~1\leq i\leq n-r\},\ee
i.e., $d_F$ is the number of negative weights at $F$.

With these notions understood, we have the following localization formula for the $\chi_{-y}$-genus, which is essentially due to Kosniowski (\cite{Ko70}).
\begin{theorem}
Let $M$ be an almost-complex $S^1$-manifold with $M^{S^1}=\coprod F$. Then
\be\label{chiylocalization}
\chi_{-y}(M)=\sum_F\chi_{-y}(F)\cdot y^{d_F},\ee
where the sum is over the connected components $F$ in $M^{S^1}$ and $d_F$ given by (\ref{9}).
\end{theorem}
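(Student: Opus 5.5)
The plan is to prove (\ref{chiylocalization}) with the Atiyah--Bott--Segal--Singer holomorphic Lefschetz formula for the $S^1$-equivariant indices of the operators $D_p$ in (\ref{GDC}), combined with a rigidity argument. Because the $S^1$-invariant almost-Hermitian metric is preserved by the action, each $D_p$ is $S^1$-equivariant. Hence its index refines to a virtual character $\chi^p(M)(g)\in\mathbb{Z}[g,g^{-1}]$, $g\in S^1$. We then form the equivariant genus $\chi_{-y}(M)(g):=\sum_p(-1)^p\chi^p(M)(g)y^p$, a Laurent polynomial in $g$ with coefficients in $\mathbb{Z}[y]$, whose value at $g=1$ is $\chi_{-y}(M)$.

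\emph{Step 1 (Lefschetz formula).} For a topological generator $g$, the fixed point formula expresses $\chi_{-y}(M)(g)$ as a sum over the components $F$. Using the splitting $\nu(F)=\bigoplus L(F,k_i)$, the contribution of $F$ is
\[
\int_F\prod_{j=1}^{r}\frac{x_j(1-ye^{-x_j})}{1-e^{-x_j}}\prod_{i=1}^{n-r}\frac{1-yg^{k_i}e^{-u_i}}{1-g^{k_i}e^{-u_i}},
\]
where $x_j$ are the Chern roots of $F$ and $u_i=c_1(L(F,k_i))$. The right-hand side is a rational function of $g$, and the identity then extends to all $g\in\mathbb{C}^{*}$ with $g^{k_i}\neq 1$.

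\emph{Step 2 (rigidity via limits).} The left-hand side is a Laurent polynomial in $g$, so it has no poles in $\mathbb{C}^{*}$. We examine the right-hand side as $g\to0$ and $g\to\infty$. Each factor $\frac{1-yg^{k}e^{-u}}{1-g^{k}e^{-u}}$, expanded formally in the nilpotent $u$, tends to $1$ as $g\to0$ if $k>0$ and to $y$ if $k<0$. As $g\to\infty$ the roles swap: it tends to $y$ if $k>0$ and to $1$ if $k<0$. Therefore both limits of the right-hand side are finite. The left-hand side is a Laurent polynomial with finite limits at $0$ and $\infty$, so it must be constant in $g$, namely $\chi_{-y}(M)$; this is the rigidity. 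The limit $g\to0$ then yields $\chi_{-y}(M)=\sum_F\chi_{-y}(F)\,y^{d_F}$, because the factors with negative weights contribute exactly $y^{d_F}$. (The limit $g\to\infty$ gives $\sum_F\chi_{-y}(F)y^{n-r-d_F}$, consistent with (\ref{chiyduality}) but not needed here.)

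\emph{Main obstacle.} Step 2 requires care: the limits must be taken term by term inside the cohomology of $F$. Expanding in the nilpotent $u_i$, every coefficient is a rational function of $g^{k_i}$. For example,
\[
\frac{1-yze^{-u}}{1-ze^{-u}}=y+\frac{1-y}{1-ze^{-u}},
\]
and the higher $u$-derivatives of $(1-ze^{-u})^{-1}$ vanish both as $z\to0$ and as $z\to\infty$. The Lefschetz formula for non-integrable $J$ follows from the Atiyah--Singer equivariant index theorem applied to the elliptic operator $D_p$, so I would simply cite it, as Kosniowski \cite{Ko70} does.
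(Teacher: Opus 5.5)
Your proposal is correct and is the argument the paper has in mind. The paper gives no proof of its own: it attributes the formula to Kosniowski as a standard application of the Atiyah--Bott--Singer fixed point theorem (cf.\ \S5.7 of Hirzebruch--Berger--Jung), which is exactly your equivariant Lefschetz formula followed by rigidity obtained by letting $g\to 0$ and $g\to\infty$. Your limit computation is sound, and because both limits give valid identities, the conclusion does not depend on whether the local contribution is written with $g^{k_i}$ or with $g^{-k_i}$.
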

\begin{remark}
The proof of this formula is a typical application of the Atiyah-Bott-Singer fixed point theorem (see also \cite[\S5.7]{HBJ}). The first author refined this idea in \cite{Li12} to give some related applications in symplectic geometry.
\end{remark}

\subsection{Symplectic and Hamiltonian $S^1$-actions}
We assume in this subsection that $M=(M,\omega)$ is a symplectic manifold.

An $S^1$-action on $M$ is called \emph{symplectic} if it preserves the symplectic form $\omega$: $g^{\ast}(\omega)=\omega$ for any $g\in S^1$. In such case $M=(M,\omega)$ is called a \emph{symplectic $S^1$-manifold}. If $(M,\omega)$ is a symplectic $S^1$-manifold, it is well-known that we can always find an almost-complex structure both compatible with $\omega$ and preserved by this $S^1$-action. So notions in Section \ref{section5.1} can be applicable to the setting of symplectic $S^1$-actions \emph{without} explicitly mentioning this compatible almost-complex structure.

Let $X$ be the generating vector field of an $S^1$-action on $(M,\omega)$. This action is symplectic if and only if the one-form $\omega(X,\cdot)$ is closed, which is due to the Cartan formula (\cite[p.71]{Au}). A symplectic $S^1$-action on $(M,\omega)$ is called \emph{Hamiltonian} if the one-form $\omega(X,\cdot)$ is exact, i.e., $\omega(X,\cdot)=\text{d}f$ for some smooth function $f$ on $M$. In such case $(M,\omega)$ is called a \emph{Hamiltonian $S^1$-manifold}. This $f$ is usually called the \emph{moment map} of this Hamiltonian $S^1$-action, which is unique up to an additive constant. For a Hamiltonian $S^1$-action on $M$, the set $M^{S^1}$ is exactly that of the critical points of the moment map $f$ and hence \emph{nonempty} as the points minimizing and maximizing $f$ are critical. Nevertheless, in general a symplectic $S^1$-action on $M$ with nonempty $M^{S^1}$ may not be Hamiltonian, even if $M^{S^1}$ only consist of isolated fixed points (\cite{To}).

Let us at this moment digress to briefly recall a notion introduced by Novikov (see \cite{No} or \cite[\S1.5]{Fa2}). For a finite CW-complex $X$, any cohomology class $\xi\in H^1(X;\mathbb{R})$ can be associated to a sequence of nonnegative integers $b_i(\xi)$ ($0\leq i\leq\dim X$), now known as the \emph{Novikov numbers}, which are analogous to and bounded above by the usual Betti numbers $b_i(X)$ (\cite[\S1.6]{Fa2}). The precise definition of Novikov numbers is not important in our article but only the following fact is needed: $b_i(0)=b_i(X)$ (\cite[Prop.1.28]{Fa2}).

Suppose now that $M$ is equipped with a symplectic $S^1$-action, $X$ its generating vector field, and $\xi:=[\omega(X,\cdot)]\in H^1(M;\mathbb{R})$ the de Rham cohomology class of the closed one-form $\omega(X,\cdot)$. Thus this symplectic $S^1$-manifold can be attached to the Novikov numbers $b_i(\xi)$, which reduce to the usual Betti numbers $b_i(M)$ whenever this symplectic $S^1$-action is Hamiltonian.

For a symplectic $S^1$-manifold $M$, the associated Novikov numbers $b_i(\xi)$ can be calculated in terms of the information around the fixed point set $M^{S^1}$ as follows (\cite[Thm 7.5]{Fa2}).
\begin{theorem}\label{farberthm}
Let $(M,\omega)$ be a symplectic $S^1$-manifold with $M^{S^1}=\coprod F$, $X$ the generating vector field and $\xi=[\omega(X,\cdot)]\in H^1(M;\mathbb{R})$. Then
\be\label{Morselocation}\sum_{i=0}^{2n}b_i(\xi)y^i=\sum_FP_y(F)y^{2d_F},\ee
where $$P_y(F):=\sum_{j=0}^{\dim F}b_j(F)y^j$$ is the Poincar\'{e} polynomial of $F$, $d_F$ introduced in (\ref{9}), and the sum over the connected components $F$ in $M^{S^1}$.
\end{theorem}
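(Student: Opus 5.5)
The plan is to regard $\theta:=\omega(X,\cdot)$ as a closed one-form of Morse--Bott type and to apply Novikov's Morse theory for closed one-forms, the analogue of the Frankel--Kirwan perfectness argument in the Hamiltonian case. Formula (\ref{Morselocation}) is then a statement that this Morse--Bott closed one-form is perfect in the Novikov sense.

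\emph{Step 1 (local model at $F$).} The zeros of $\theta$ are exactly the zeros of $X$, so they form $M^{S^1}=\coprod F$. Near a component $F$, I would use an $S^1$-equivariant Darboux--Weinstein normal form. In it, $\theta=\mathrm{d}f_F$ with
$$f_F=\mathrm{const}+\tfrac12\sum_{i=1}^{n-r}k_i|z_i|^2,$$
where the $z_i$ are fibre coordinates on the line bundles $L(F,k_i)$. The overall sign depends on conventions and is to be fixed so that the statement holds. Hence $\theta$ is Morse--Bott, and its nondegenerate critical submanifolds are the $F$. The negative normal bundle of $F$ is $\bigoplus_{k_i<0}L(F,k_i)$. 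This is a complex bundle, hence oriented, and its real rank gives the index $2d_F$.

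\emph{Step 2 (Morse--Bott--Novikov inequalities).} Let $\mathcal{L}_\xi$ be the flat line bundle over $M$ determined by $\xi$, or equivalently work with Novikov ring coefficients. Build the Novikov-type spectral sequence or filtration from the Morse--Bott decomposition of $M$ by the flow of a gradient-like field for $\theta$. Because the negative bundles are orientable and $\xi$ restricts to $0$ on each $F$ (as $\theta|_F=\mathrm{d}f_F|_F=0$), the $E_1$-page is $\bigoplus_F H_{*-2d_F}(F)$ with untwisted coefficients. This gives Morse--Bott--Novikov inequalities: the Novikov polynomial is dominated coefficientwise by $\sum_F P_y(F)y^{2d_F}$.

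\emph{Step 3 (perfectness).} This is the step I expect to be the main obstacle. A lacunary argument fails, since the $F$ may have odd-degree cohomology. Instead I would adapt the Atiyah--Bott/Kirwan equivariant argument. Pass to $S^1$-equivariant Novikov cohomology. Since $S^1$ acts on each negative normal bundle with only nonzero weights, the equivariant Euler class of that bundle is a non-zero-divisor in $H^*_{S^1}(F)=H^*(F)\otimes H^*(BS^1)$. This forces the relevant long exact sequences to split, so the equivariant Novikov theory is perfect. Then I would show that $M$ is equivariantly formal in the Novikov sense. The heuristic is that the action is ``locally Hamiltonian'' along $\theta$: the cover on which $\theta$ becomes exact carries a lifted Hamiltonian action, and Kirwan surjectivity there should descend. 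This yields
$$H^*_{S^1}(M;\mathcal{L}_\xi)\cong H^*(M;\mathcal{L}_\xi)\otimes H^*(BS^1).$$
Dividing out the $H^*(BS^1)$ factor gives equality in Step 2.

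\emph{Fallback for Step 3.} If the equivariant route becomes messy, I would first treat $\xi$ rational. In that case $\theta$ is, up to scaling, $\mathrm{d}$ of a circle-valued map $M\to S^1$, and perfectness can be checked on the infinite cyclic cover with Novikov completion. Then I would use that both sides of (\ref{Morselocation}) are constant on a small neighbourhood of $\xi$: the right side depends only on fixed-point data, and Novikov numbers are generically constant. This extends the result to real $\xi$ by approximating $\omega$ with an invariant symplectic form with rational period class.
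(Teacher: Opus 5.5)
The paper gives no proof of this statement. It is quoted from Farber's book on closed one-forms (Thm.~7.5 there), so your proposal has to stand on its own.

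\textbf{Steps 1 and 2 are sound.} $\theta$ vanishes pointwise on $M^{S^1}$ and is locally $\mathrm{d}$ of a quadratic form built from the weights. Its Morse--Bott index at $F$ is $2d_F$, and the negative bundles are complex, hence oriented. The Novikov--Bott inequalities $b_i(\xi)\le\sum_F b_{i-2d_F}(F)$ are available in the literature (Braverman--Farber).

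\textbf{The gap is Step 3: the reverse inequality is never actually established.}

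\emph{Equivariant perfectness.} The Atiyah--Bott splitting argument needs a \emph{finite} $S^1$-invariant filtration of $M$ by sublevel sets, whose successive pairs are Thom pairs of the negative bundles. A non-exact $\theta$ gives no such filtration of $M$. On the cover where $\theta=\mathrm{d}\tilde f$, the filtration is bi-infinite. The level-by-level splitting would then have to be carried through inverse limits and the Novikov completion, and you do not address this.

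\emph{Equivariant formality.} This is only a heuristic. The cover is non-compact and $\tilde f$ is unbounded above and below, so Kirwan's results do not apply there. Moreover, the ordinary cohomology of the cover is not Novikov cohomology, so there is nothing to ``descend''.

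\emph{The fallback.} The rational case is the same problem restated. Novikov numbers are also not locally constant in $\xi$: already $b_0(0)=1$ while $b_0(\xi)=0$ for $\xi\neq0$. So asserting $b_i(\xi)=b_i(\xi')$ for a rational approximant $\xi'$ presupposes the conclusion.

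\textbf{The missing idea.} The reverse inequality needs no equivariant Morse theory, only Borel localization with twisted coefficients.

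Let $L_t$ be the trivial real line bundle on $M$ with flat connection $\mathrm{d}+t\theta$. Since $g^*\theta=\theta$, the obvious $S^1$-action makes $L_t$ an equivariant flat bundle. Since $\theta\equiv0$ on each $F$, $L_t|_F$ is trivial with trivial action. Localization then gives
$H^*_{S^1}(M;L_t)[u^{-1}]\cong\bigoplus_F H^*(F;\mathbb{R})\otimes\mathbb{R}[u,u^{-1}]$.
Hence the $\mathbb{R}[u]$-rank of $H^*_{S^1}(M;L_t)$ equals $\sum_F\sum_j b_j(F)$.

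On the other hand, the Serre spectral sequence of $M\to M_{S^1}\to BS^1$ (simply connected base) bounds this rank by $\sum_i\dim H^i(M;L_t)$. For all but finitely many $t$ one has $\dim H^i(M;L_t)=b_i(\xi)$. Therefore $\sum_F\sum_j b_j(F)\le\sum_i b_i(\xi)$.

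Together with the coefficientwise inequalities of Step 2, equality of the totals forces equality coefficient by coefficient, which is the theorem.

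Even on your own route, the formality step is automatic once equivariant perfectness is known. Coefficientwise one has
\[
\frac{1}{1-y^2}\sum_F P_y(F)y^{2d_F}=\sum_k \dim H^k_{S^1}(M;L_t)\,y^k\le\frac{1}{1-y^2}\sum_i\dim H^i(M;L_t)\,y^i\le\frac{1}{1-y^2}\sum_F P_y(F)y^{2d_F},
\]
so all three terms coincide.
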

\begin{remark}
When the action in Theorem \ref{farberthm} is Hamiltonian, the formula (\ref{Morselocation}) is well-known as in this case the moment map is a perfect Morse-Bott function and the Morse-Bott index of the critical submanifold $F$ is exactly $2d_F$ (see \cite{Ki}, \cite[p.108]{Au}, \cite{PR}). For K\"{a}hler manifolds this is due to Frankel (\cite{Fr59}), building on the pioneering work of Bott (\cite{Bo}).
\end{remark}

\subsection{Applications to symplectic $S^1$-manifolds}
The following fact provides examples for $\chi$-positive symplectic manifolds.
\begin{lemma}\label{lemma2}
Let $(M,\omega)$ be a symplectic $S^1$-manifolds with isolated fixed points such that all the associated even-dimensional Novikov numbers $b_{2i}(\xi)$ ($0\leq i\leq n$) are nonzero. Then $M$ is $\chi$-positive. In particular, all Hamiltonian $S^1$-manifolds with isolated fixed points are $\chi$-positive.
\end{lemma}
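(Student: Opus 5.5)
The plan is to compare the two localization formulas, (\ref{chiylocalization}) for the $\chi_{-y}$-genus and Theorem \ref{farberthm} for the Novikov numbers, in the special case where every fixed component is a point.

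First, take an $\omega$-compatible almost-complex structure preserved by the $S^1$-action. This puts us in the setting of Section \ref{section5.1}. The fixed point set $M^{S^1}$ is nonempty: if it were empty, the right-hand side of (\ref{Morselocation}) would vanish, contradicting $b_0(\xi)\neq0$. Since the fixed points are isolated, each component $F$ is a point. Hence $\chi_{-y}(F)=1$ and $P_y(F)=1$.

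Applying (\ref{chiylocalization}) then gives
$$\chi_{-y}(M)=\sum_{F}y^{d_F}=\sum_{p=0}^n N_p\,y^p,\qquad N_p:=\sharp\{F~|~d_F=p\}.$$
Applying Theorem \ref{farberthm} gives
$$\sum_{i=0}^{2n}b_i(\xi)y^i=\sum_F y^{2d_F}=\sum_{p=0}^n N_p\,y^{2p}.$$
Comparing coefficients, $b_{2p}(\xi)=N_p$ and all odd Novikov numbers vanish. Comparing with $\chi_{-y}(M)=\sum_p(-1)^p\chi^p(M)y^p$ then yields
$$(-1)^p\chi^p(M)=N_p=b_{2p}(\xi)\qquad (0\leq p\leq n).$$
The hypothesis $b_{2p}(\xi)\neq0$ now says exactly that $(-1)^p\chi^p(M)>0$ for all $p$. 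This is $\chi$-positivity in the sense of Definition \ref{def}.

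For the Hamiltonian case, $\xi=0$, so $b_i(\xi)=b_i(M)$. The fixed point set is nonempty, because the extrema of the moment map are fixed points. It remains to show $b_{2p}(M)\neq0$ for all $0\leq p\leq n$. Since $M$ is a closed symplectic $2n$-manifold, the classes $[\omega]^p\in H^{2p}(M;\mathbb{R})$ are nonzero, because $[\omega]^n$ evaluates to the nonzero symplectic volume. Hence $b_{2p}(M)\geq1$, and the first part applies.

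There is no real obstacle. The only points needing care are the nonemptiness of $M^{S^1}$, which is needed to invoke (\ref{chiylocalization}), and matching the weight count $d_F$ used in the two formulas. Both formulas use the same $d_F$ from (\ref{9}), so they match once the almost-complex structure is chosen compatible with $\omega$ and invariant under the action.
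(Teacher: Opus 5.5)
Your proposal is correct and follows essentially the paper's own argument: you specialize (\ref{chiylocalization}) and (\ref{Morselocation}) to isolated fixed points to get $\chi_{-y}(M)=\sum_{i=0}^n b_{2i}(\xi)y^i$, and in the Hamiltonian case you use $\xi=0$ together with $[\omega]^i\neq 0$. Your explicit check that $M^{S^1}\neq\emptyset$ is a harmless addition that the paper leaves implicit.
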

\begin{proof}
If the fixed point set $M^{S^1}=\coprod F$ consists of isolated fixed points $F$, then (\ref{chiylocalization}) and (\ref{Morselocation}) read
$$\chi_{-y}(M)=\sum_Fy^{d_F}$$
and
$$\sum_{i=0}^nb_{2i}(\xi)y^i=\sum_Fy^{d_F}$$
respectively, which imply that
\be\label{3}\chi_{-y}(M)=\sum_{i=0}^nb_{2i}(\xi)y^i\ee
and hence $M$ is $\chi$-positive if all even-dimensional Novikov numbers $b_{2i}(\xi)$ are nonzero. When the action is Hamiltonian, $b_{2i}(\xi)$ are the Betti numbers $b_{2i}(M)$, which are nonzero as $0\neq[\omega^i]\in H^{2i}(M;\mathbb{R})$.
\end{proof}
\begin{remark}\label{remarkoddvanishing}
All the odd-dimensional Novikov numbers (resp. Betti numbers) of symplectic (resp. Hamiltonian) $S^1$-manifolds with isolated fixed points are necessarily zero, still due to (\ref{Morselocation}). Comparing (\ref{3}) with (\ref{4}), it is interesting to see that Hamiltonian $S^1$-manifolds with isolated fixed points behave like K\"{a}hler manifolds of pure type.
\end{remark}

Combining Theorem \ref{main} with Lemma \ref{lemma2} yields
\begin{theorem}\label{mainapp3}
Let $M$ be a symplectic $S^1$-manifold with isolated fixed points and all the associated even-dimensional Novikov numbers $b_{2p}(\xi)$ are nonzero. Then $M$ satisfies optimal Chern number inequalities
\be\label{chernnumberinequality2}
\begin{split}
A_i(c_1,\ldots,c_n)[M]&\geq A_i\Big({n+1\choose 1},\ldots,{n+1\choose n}\Big)\\
&=A_{i}
(c_1,\ldots,c_n)[\mathbb{P}^n],\qquad0\leq i\leq[\frac{n}{2}],
\end{split}\nonumber\ee
where the $i$-th equality case occurs if and only if
\be\label{equalitycase1}b_{2p}(\xi)=1,\qquad 2i\leq p\leq n.\nonumber\ee
In particular,
\begin{eqnarray}
\left\{ \begin{array}{ll}
c_n[M]\geq n+1,\\
~\\
\big[\frac{n(3n-5)}{2}c_{n}+c_{1}c_{n-1}\big][M]
\geq2(n-1)n(n+1).
\end{array} \right.\nonumber
\end{eqnarray}
Moreover, these results particularly hold true for Hamiltonian $S^1$-manifolds with isolated fixed points.
\end{theorem}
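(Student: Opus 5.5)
The plan is to reduce Theorem \ref{mainapp3} directly to Theorem \ref{main} with $\epsilon=1$, using Lemma \ref{lemma2} to supply $\chi$-positivity. The only work beyond quoting these results is translating the equality criterion (\ref{equalitycase1}) of Theorem \ref{main} into a statement about Novikov numbers.

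First, I invoke Lemma \ref{lemma2}. The hypotheses (isolated fixed points, all $b_{2p}(\xi)\neq0$) give that $M$ is $\chi$-positive. Its proof also gives the identity (\ref{3}), $\chi_{-y}(M)=\sum_{p=0}^n b_{2p}(\xi)y^p$. Comparing coefficients with $\chi_{-y}(M)=\sum_p(-1)^p\chi^p(M)y^p$ gives $(-1)^p\chi^p(M)=b_{2p}(\xi)$ for every $p$. The identity (\ref{3}) itself comes from combining Kosniowski's localization formula (\ref{chiylocalization}) with Farber's formula (\ref{Morselocation}). In the isolated case both sides reduce to $\sum_F y^{d_F}$, since $\chi_{-y}(\mathrm{pt})=P_y(\mathrm{pt})=1$.

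Second, I apply Theorem \ref{main} with $\epsilon=1$. This immediately yields the $[\frac n2]+1$ inequalities $A_i[M]\geq A_i[\mathbb{P}^n]$. The two displayed special cases follow from (\ref{firstfew}) with $\epsilon^n=1$. The $i$-th equality holds if and only if $\chi^p(M)=(-1)^p$ for $2i\leq p\leq n$. By the coefficient identity above, this is equivalent to $b_{2p}(\xi)=1$ for $2i\leq p\leq n$, which is the claimed characterization. Optimality is witnessed by $\mathbb{P}^n$ with its standard Hamiltonian $S^1$-action with isolated fixed points, where all these quantities equal $1$.

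Finally, for the Hamiltonian case, the action has nonempty fixed point set and $\xi=0$, so $b_{2p}(\xi)=b_{2p}(M)$. These Betti numbers are nonzero since $[\omega^p]\neq0$; this is already the last assertion of Lemma \ref{lemma2}. The hypotheses are therefore satisfied, and the equality condition becomes $b_{2p}(M)=1$.

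There is no genuine obstacle. The only point requiring care is the bookkeeping in the equality case: one must use the full coefficient identity (\ref{3}), not merely the positivity statement of Lemma \ref{lemma2}, in order to convert the condition on $\chi^p$ into one on $b_{2p}(\xi)$.
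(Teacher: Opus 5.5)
Your proposal is correct and follows the paper's own argument. The paper proves the theorem simply by combining Theorem~\ref{main} (with $\epsilon=1$) and Lemma~\ref{lemma2}, and your use of the coefficient identity (\ref{3}) to turn the equality condition $\chi^p(M)=(-1)^p$ into $b_{2p}(\xi)=1$ is exactly the step the paper leaves implicit.
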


\section{The signature formula on symplectic $S^1$-manifolds}\label{section6}
We derive in this section a signature formula (Theorem \ref{mainapp4}) for symplectic $S^1$-manifolds, which is inspired by the work of M. Farber (\cite{Fa1}).
\subsection{The signature}
Denote by $\sigma(X)$ the \emph{signature} of a $4m$-dimensional manifold $X$. By definition $\sigma(X)=b^{+}_{2m}-b^-_{2m}$, where $b^{+}_{2m}$ (resp. $b^-_{2m}$) is the dimension of maximal subspace in $H^{2m}(X;\mathbb{R})$ where the intersection pairing is positive-definite (resp. negative-definite). We have $b^{+}_{2m}+b^-_{2m}=b_{2m}(X)$ due to the Poincar\'{e} duality. The convention that $\sigma(X)=0$ is understood if the dimension $X$ is not divisible by $4$.

The Hirzebruch signature theorem says that $\sigma(X)$ can be expressed as a specific rational linear combination of Pontrjagin numbers via the $L$-genus introduced by him (\cite{Hi66}). Beautiful closed formulas in terms of some variant of multiple zeta values have been given in \cite{BB} for these coefficients, as well as those of the $\hat{A}$-genus. When the manifold $X$ in consideration is (stably) almost-complex, $\sigma(X)$ is then a specific rational linear combination of Chern numbers, of whose coefficients the closed formulas are given in \cite[Thm 2.4]{LL}.

As mentioned in Section \ref{section2.2}, for an almost-complex manifold $M$ we have the identity $\chi_y(M)\big|_{y=1}=\sigma(M)$, even if $2n$, the real dimension of $M$, is not divisible by $4$. Indeed, by putting $y=1$ in (\ref{chiyduality}) we have $\chi_y(M)\big|_{y=1}=0$ whenever $n$ is odd.

\subsection{The signature of symplectic $S^1$-manifolds}
Before stating the result, we give the following
\begin{definition}\label{def2}
An even-dimensional manifold $X$ is called \emph{signature-alternating} if \be\label{sig alter}\sigma(X)=\sum_{i=0}^{\frac{1}{2}\dim X}(-1)^ib_{2i}(X),\ee
i.e., $\sigma(X)$ is equal to the alternating sum of even-dimensional Betti numbers of $X$.
\end{definition}
\begin{remark}\label{remark2}
When $\dim X\equiv 2\pmod 4$, the right-hand side of (\ref{sig alter}) is zero due to the Poincar\'{e} duality, and thus (\ref{sig alter}) still trivially holds true in this case. So (\ref{sig alter}) is interesting only if $\dim X\equiv0\pmod 4$. Notice that (\ref{sig alter}) is also trivially true when $X$ is a point.
\end{remark}

With Definition \ref{def2} in hand, the main observation in this section is the following result, which is inspired by and meanwhile improve on a result of Farber (see \cite[p.210]{Fa1} or \cite[Thm 7.10]{Fa2})
\begin{theorem}\label{mainapp4}
Let $M$ be a symplectic $S^1$-manifold with $M^{S^1}=\coprod F$, $X$ the generating vector field and $\xi=[\omega(X,\cdot)]\in H^1(M;\mathbb{R})$. If all the connected components $F$ in $M^{S^1}$ are signature alternating, then
\be\label{10}\sigma(M)=\sum_{i=0}^n(-1)^ib_{2i}(\xi).\ee
\end{theorem}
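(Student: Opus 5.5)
The plan is to evaluate both localization formulas at special values of $y$ and compare them. We use the Kosniowski formula (\ref{chiylocalization}) at $y=-1$ and the Farber formula (\ref{Morselocation}) at $y=\sqrt{-1}$.

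First, we treat the signature side. Since $\chi_y(M)\big|_{y=1}=\sigma(M)$, we have $\chi_{-y}(M)\big|_{y=-1}=\sigma(M)$. Each fixed component $F$ is an almost-Hermitian submanifold, hence almost-complex, so the same identity gives $\chi_{-y}(F)\big|_{y=-1}=\sigma(F)$. Here $\sigma(F)=0$ is understood when $\dim_{\mathbb{C}}F$ is odd, and $\sigma(F)=1$ when $F$ is a point. Putting $y=-1$ into (\ref{chiylocalization}) then yields
$$\sigma(M)=\sum_F(-1)^{d_F}\sigma(F).$$

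Second, we treat the Novikov side. Substituting $y=\sqrt{-1}$ into (\ref{Morselocation}) gives
$$\sum_{i=0}^{2n}b_i(\xi)(\sqrt{-1})^i=\sum_F P_{\sqrt{-1}}(F)\,(-1)^{d_F}.$$
We take real parts of both sides. The odd-degree terms on each side are purely imaginary, so they drop out. The real part of the left side is $\sum_{i=0}^n(-1)^ib_{2i}(\xi)$. The real part of $P_{\sqrt{-1}}(F)$ is $\sum_j(-1)^jb_{2j}(F)$. Therefore
$$\sum_{i=0}^n(-1)^ib_{2i}(\xi)=\sum_F(-1)^{d_F}\sum_{j}(-1)^jb_{2j}(F).$$

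Third, we combine the two identities. The signature-alternating hypothesis says exactly that $\sigma(F)=\sum_j(-1)^jb_{2j}(F)$ for each $F$. This includes the case $\dim F\equiv2\pmod 4$, where both sides vanish as in Remark \ref{remark2}, and the case where $F$ is a point. Hence the two right-hand sides coincide term by term, and (\ref{10}) follows.

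The main point to check is the bookkeeping of signs and normalizations. The $\chi_{-y}$-genus of $F$ must be the one taken with respect to the almost-complex structure induced on $F$. Its value at $y=-1$ must be the signature of $F$ with the canonical orientation, which is the orientation implicitly used in Definition \ref{def2}. Also, the exponent $2d_F$ in Farber's formula must evaluate to $(-1)^{d_F}$ at $y=\sqrt{-1}$, which it does since $(\sqrt{-1})^{2d_F}=(-1)^{d_F}$. With these matched, the argument is purely formal.
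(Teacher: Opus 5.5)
Your proof is correct and follows the paper's argument. Kosniowski's formula at $y=-1$ gives $\sigma(M)=\sum_F(-1)^{d_F}\sigma(F)$, and the signature-alternating hypothesis matches this term by term with the even part of Farber's formula; substituting $y=\sqrt{-1}$ and taking real parts is just a compact form of the paper's step of keeping only the even powers of $y$ in (\ref{Morselocation}) and then setting $y=-1$.
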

\begin{remark}
\begin{enumerate}
\item
Farber showed Theorem \ref{mainapp4} under the additional requirement that the odd-dimensional Betti numbers of $F$ are all zero. A manifold both satisfies (\ref{sig alter}) and with vanishing odd-dimensional Betti numbers is called an \emph{$i$-manifold} in \cite{Fa1} and \cite{Fa2}.

\item
Theorem \ref{mainapp4} implies that, if all the connected components in $M^{S^1}$ of a Hamiltonian $S^1$-manifold $M$ are signature-alternating, then so is $M$.
\end{enumerate}
\end{remark}

With Remark \ref{remark2} in mind, Theorem \ref{mainapp4} has the following consequence.
\begin{corollary}\label{coro}
Let $M$ be a symplectic $S^1$-manifold with $M^{S^1}=\coprod F$. If each connected component $F$ in $M^{S^1}$ is either an isolated point or of $\dim F\equiv 2\pmod 4$, then
\be\label{11}\sigma(M)=\sum_{i=0}^n(-1)^ib_{2i}(\xi).\nonumber\ee
\end{corollary}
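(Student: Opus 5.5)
The corollary will follow from Theorem \ref{mainapp4} once every connected component $F$ of $M^{S^1}$ is shown to be signature-alternating in the sense of Definition \ref{def2}. Then (\ref{10}) gives exactly the desired identity $\sigma(M)=\sum_{i=0}^n(-1)^ib_{2i}(\xi)$. So the whole argument reduces to checking the hypothesis of Theorem \ref{mainapp4} for each $F$.

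First, recall that each $F$ is a closed almost-Hermitian (in fact symplectic) submanifold of $M$. In particular it is a closed, orientable, even-dimensional manifold, so its Betti numbers satisfy Poincar\'{e} duality. There are two cases allowed by the hypothesis.

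\emph{The case $F$ is an isolated point.} Here $\sigma(F)=0$ by convention, since $\dim F=0$ is divisible by $4$ but $H^0$ pairs trivially into top degree $0$. One should double-check this convention: for a point, $H^0=\mathbb{R}$ with the pairing $1\cdot 1=1$, giving $\sigma=1$. The right-hand side of (\ref{sig alter}) is $b_0=1$. Either way, Remark \ref{remark2} explicitly records that (\ref{sig alter}) holds for a point, so I simply invoke it.

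\emph{The case $\dim F\equiv 2\pmod 4$.} Write $\dim F=4m+2$. The left side $\sigma(F)$ is $0$ by the stated convention. On the right side, Poincar\'{e} duality gives $b_{2i}(F)=b_{4m+2-2i}(F)$, and the indices $i$ and $2m+1-i$ have opposite parity. Hence the terms in $\sum_{i=0}^{2m+1}(-1)^ib_{2i}(F)$ cancel in pairs, and no index is self-paired because $2m+1$ is odd. This is precisely the content of Remark \ref{remark2}, so (\ref{sig alter}) holds.

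With both cases verified, all components are signature-alternating and Theorem \ref{mainapp4} applies directly. There is no real obstacle. The only point needing care is the orientability of $F$, required for Poincar\'{e} duality, which follows from $F$ being almost-complex.
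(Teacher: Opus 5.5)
Your proposal is correct and matches the paper's argument: by Remark~\ref{remark2}, isolated points and components with $\dim F\equiv 2\pmod 4$ are signature-alternating, so Theorem~\ref{mainapp4} gives the formula directly. One small cleanup in the point case: drop the opening claim $\sigma(F)=0$ and the phrase ``either way'', because the identity holds there only since $\sigma(\mathrm{pt})=1=b_0$ (the same value $\chi_y(\mathrm{pt})|_{y=1}=1$ that enters (\ref{13})); with $\sigma(\mathrm{pt})=0$ it would fail.
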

\begin{remark}
When the action is Hamiltonian with only isolated fixed points, Corollary \ref{coro} is due to Jones-Rawnsley (\cite{JR}). See \cite[p.209]{Fa1} or \cite[Thm 7.9]{Fa2} for the symplectic $S^1$-manifolds with isolated fixed points.  See also \cite[Thm 2.2]{Lin1} for an equivalent form of this corollary when the action is Hamiltonian.
\end{remark}

\subsection{Proof of Theorem \ref{mainapp4}}
\begin{proof}
Let the notation be as in Theorem \ref{mainapp4}. For each connected component $F$ define
$$P_y^{\text{even}}(F):=\sum_{i=0}^{\frac{1}{2}\dim F}b_{2i}(F)y^i.$$
Note that $F$ being signature-alternating is equivalent to $\sigma(F)=P_{-1}^{\text{even}}(F)$. Applying this notation we consider only the even powers on both sides of (\ref{Morselocation}), which leads to
\be\sum_{i=0}^{n}b_{2i}(\xi)y^i=
\sum_{F}P_y^{\text{even}}(F)y^{d_F}\nonumber\ee
and hence
\be\label{12}
\begin{split}
\sum_{i=0}^{n}b_{2i}(\xi)(-1)^i&=
\sum_{F}P_{-1}^{\text{even}}(F)(-1)^{d_F}\\
&=\sum_{F}\sigma(F)(-1)^{d_F}.\quad(\text{$F$ being signature-alternating})
\end{split}\ee

On the other hand, specializing to $y=-1$ in (\ref{chiylocalization}) tells us that
\be\label{13}\sigma(M)=\sum_{F}\sigma(F)(-1)^{d_F}.\ee
Combining (\ref{12}) with (\ref{13}) yields the desired (\ref{10}).
\end{proof}

\section{Betti number restrictions on Hamiltonian $S^1$-manifolds}\label{section7}
In this last section we shall apply the signature formula in Section \ref{section6} to prove Betti numbers restrictions on Hamiltonian $S^1$-manifolds. To put our applications into perspective, we first recall some background results.
\subsection{Background results}
The classical Hodge theory imposes strong restrictions on the underlying topology of K\"{a}hler manifolds. For instance, the Hard Lefschetz theorem (\cite[p.122]{GH}) assert that, for a K\"{a}hler manifold $(M,\omega)$, we have the isomorphisms:
$$\wedge[\omega]^{n-i}:\quad H^i(M;\mathbb{R})\xrightarrow{\cong}
H^{2n-i}(M;\mathbb{R}),\quad0\leq i\leq n-1.$$
This particularly implies that
$$\wedge[\omega]:\quad H^i(M;\mathbb{R})\xrightarrow{\cong}
H^{i+2}(M;\mathbb{R}),\quad0\leq i\leq n-2$$
is injective, and hence the even-dimensional or odd-dimensional Betti numbers are \emph{unimodal}:
$$b_i(M)\leq b_{i+2}(M),\quad 0\leq i\leq n-2.$$

Recall from Remark \ref{remarkoddvanishing} that only even-dimensional Betti numbers are involved in Hamiltonian $S^1$-manifolds with isolated fixed points, and there exists some similarity between K\"{a}hler manifold of pure type and Hamiltonian $S^1$-manifolds with isolated fixed points. Therefore the following question posed by Tolman seems to be natural (see \cite[p.11]{JHKLM}).
\begin{question}\label{question}
Let $M$ be a Hamiltonian $S^1$-manifold with isolated fixed points. Is the sequence of inequalities
\begin{eqnarray}
\left\{ \begin{array}{ll}
b_2(M)\leq b_4(M)\cdots\leq b_{n-2}(M)\leq b_n(M)\quad&(\text{$n$ even})\\
~\\
b_2(M)\leq b_4(M)\cdots\leq b_{n-3}(M)\leq b_{n-1}(M)\quad&(\text{$n$ odd})
\end{array} \right.\nonumber
\end{eqnarray}
true?
\end{question}
Cho-Kim answered Question \ref{question} affirmatively when $\dim M=8$ (\cite[Thm 1.2]{CK}). Cho improved the result of \cite{CK} by showing the following inequality in \cite[Thm 2]{Ch} when $M^{S^1}$ are isolated, which was further extended by Lindsay (\cite[Prop.5.4]{Lin2}) in the following form.
\begin{theorem}[Cho, Lindsay]\label{theorem of Cho}
Let $M$ be a Hamiltonian $S^1$-manifold and $dim M=8k$ or $8k+4$. Assume that any connected component $F$ in $M^{S^1}$ is either an isolated point or of $\dim F\equiv2 \pmod4$.
Then
\be\sum_{i=0}^{k-1}b_{2+4i}(M)\leq\sum_{i=0}^{k-1}b_{4+4i}(M).\nonumber\ee
In particular, $b_2(M)\leq b_4(M)$ when $\dim M=8$ or $12$.
\end{theorem}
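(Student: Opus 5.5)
The plan is to combine Corollary \ref{coro} with the elementary bound $b^{+}_{2m}\geq 1$ on the middle-dimensional positive index; no new localization is needed. Since the action is Hamiltonian, $\xi=0$, and so $b_{2i}(\xi)=b_{2i}(M)$ (by $b_i(0)=b_i(X)$). The hypothesis on the fixed components is exactly that of Corollary \ref{coro}, which therefore gives
\be\label{propsig}\sigma(M)=\sum_{i=0}^{n}(-1)^ib_{2i}(M).\nonumber\ee
Write $\dim M=4m$, so the middle degree is $2m$ and $\sigma(M)=b^+_{2m}-b^-_{2m}$, $b_{2m}=b^+_{2m}+b^-_{2m}$. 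The whole point is then to express $b^+_{2m}=\frac12\big(\sigma(M)+b_{2m}(M)\big)$ as an alternating sum of Betti numbers below the middle degree, using Poincar\'e duality $b_{2i}=b_{2n-2i}$.

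\emph{Case $\dim M=8k$} ($n=4k$, $2m=4k$). Pair $b_{2i}$ with $b_{2(4k-i)}$; the signs agree because $(-1)^{4k-i}=(-1)^i$. This gives $\sigma(M)=2\sum_{i=0}^{2k-1}(-1)^ib_{2i}+b_{4k}$, and hence
$$b^+_{4k}=\sum_{i=0}^{2k-1}(-1)^ib_{2i}+b_{4k}=1+\sum_{j=1}^{k}b_{4j}-\sum_{i=0}^{k-1}b_{2+4i},$$
where we used $b_0=1$.

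\emph{Case $\dim M=8k+4$} ($n=4k+2$, $2m=4k+2$). The same pairing, now with the middle term $(-1)^{2k+1}b_{4k+2}$, gives $\sigma(M)=2\sum_{i=0}^{2k}(-1)^ib_{2i}-b_{4k+2}$. Hence
$$b^+_{4k+2}=\sum_{i=0}^{2k}(-1)^ib_{2i}=1+\sum_{j=1}^{k}b_{4j}-\sum_{i=0}^{k-1}b_{2+4i}.$$

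In both cases the desired inequality is equivalent to $b^+_{2m}\geq 1$. This holds because the class $[\omega]^m\in H^{2m}(M;\mathbb{R})$ satisfies $\int_M\omega^{2m}>0$. So the intersection form is positive on the line it spans, and that line lies in a maximal positive-definite subspace.

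I expect no serious obstacle. The only delicate points are the bookkeeping of signs in the Poincar\'e-duality pairing, in particular the sign of the middle term in the two cases, and the observation that $\xi=0$ lets us replace Novikov numbers by Betti numbers. For $\dim M=8$ or $12$ one has $k=1$, and the statement reduces to $b_2\leq b_4$.
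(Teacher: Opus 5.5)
Your proposal is correct and takes the paper's own route. The paper obtains this result as Part (1) of Corollary \ref{coro2}: Corollary \ref{coro} with $\xi=0$ makes $M$ signature-alternating, and the proof of Theorem \ref{mainapp5}(1) then runs your computation $2\leq 2b^+_{2m}=b_{2m}+\sigma(M)$, reduced by Poincar\'e duality. The paper additionally characterizes the equality case ($b^+_{2m}=1$, i.e.\ the reverse Cauchy--Schwarz inequality via Proposition \ref{prop}), which the statement does not require.
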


\subsection{Main results in this section}
In order to state our main results in this section, let us give the following definition, which was introduced in \cite[Def 1.1]{Li13} by the first author for K\"{a}her manifolds.
\begin{definition}\label{def for cauchy-schwarz}
Let $(M,\omega)$ be a symplectic manifold. $H^{2i}(M;\mathbb{R})$ ($1\leq i\leq[n/2]$) is said to satisfy the \emph{reverse Cauchy-Schwarz} (resp. \emph{Cauchy-Schwarz}) inequality if for any $\alpha\in H^{2i}(M;\mathbb{R})$ we have
\be\label{formula for cauchy-schwarz}(\int_M\alpha\wedge[\omega]^{n-i})^2\geq
(\int_M\alpha^2\wedge[\omega]^{n-2i})(\int_M[\omega]^{n}),\nonumber\ee
$$\Big(\text{resp. $(\int_M\alpha\wedge[\omega]^{n-i})^2\leq
(\int_M\alpha^2\wedge[\omega]^{n-2i})(\int_M[\omega]^{n})$},\Big)$$
and the equality case occurs if and only if $\alpha$ is proportional to $[\omega^i]$, i.e.,  $\alpha\in\mathbb{R}[\omega^i].$
\end{definition}
\begin{remark}\label{remark3}
In the case of K\"{a}hler manifolds, the first author gives in \cite[Thm 1.3]{Li13} a sufficient and necessary condition in terms of Hodge numbers to characterize when the (reverse) Cauchy-Schwarz inequality holds true on $H^{2i}(M;\mathbb{R})$, and extends it to the mixed version in \cite[Thm 1.3]{Li16}. The main tools in \cite{Li13} and \cite{Li16} are the classical and mixed Hodge-Riemann bilinear relations respectively.
\end{remark}

With Definition \ref{def for cauchy-schwarz} understood, we have the following results, whose first part extends Theorem \ref{theorem of Cho} by relaxing the assumption and meanwhile characterizing the equality case.
\begin{theorem}\label{mainapp5}
\begin{enumerate}
\item
Let $(M,\omega)$ be a signature-alternating symplectic manifold with $\dim M=8k$ or $8k+4$. Then \be\label{16.1}\sum_{i=0}^{k-1}b_{2+4i}(M)\leq\sum_{i=0}^{k-1}b_{4+4i}(M),\ee
and the equality case in (\ref{16.1}) occurs if and only if $H^{4k}(M;\mathbb{R})$ or $H^{4k+2}(M;\mathbb{R})$ satisfies the reverse Cauchy-Schwarz inequality.

\item
Let $(M,\omega)$ be a signature-alternating symplectic manifold with $\dim M=8k$ or $8k-4$. Then \be\label{16.2}\sum_{i=0}^{k-1}b_{2+4i}(M)\geq\sum_{i=0}^{k-1}b_{4i}(M),\ee
and the equality case in (\ref{16.2}) occurs if and only if $H^{4k}(M;\mathbb{R})$ or $H^{4k-2}(M;\mathbb{R})$ satisfies the Cauchy-Schwarz inequality.
\end{enumerate}
\end{theorem}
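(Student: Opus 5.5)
We compare the signature-alternating identity $\sigma(M)=\sum_i(-1)^ib_{2i}(M)$ with a lower or upper bound for $\sigma(M)$ coming from the intersection form on middle cohomology. The form is restricted to the line spanned by $[\omega^{m}]$ and to its orthogonal complement.

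\textbf{Case $\dim M=8k$.} Here $n=4k$ and the middle degree is $4k=2m$ with $m=2k$. Poincar\'e duality gives $b_{2i}=b_{4n-2i}$, where $4n$ abbreviates $2\dim M$ when reindexing. Pairing the even degrees symmetrically, the identity becomes
\[\sigma(M)=2\sum_{j<m}(-1)^jb_{2j}+(-1)^mb_{2m}=2\Big(\sum_{i=0}^{k-1}b_{4i}-\sum_{i=0}^{k-1}b_{2+4i}\Big)+b_{4k}.\]
Hence inequality (\ref{16.1}), with $b_0=1$ moved appropriately, is equivalent to a bound of the form $\sigma(M)\le b_{4k}-2$, while (\ref{16.2}) is equivalent to $\sigma(M)\ge -b_{4k}+2$. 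Since $b_0=1$ is counted in $\sum b_{4i}$, I expect these to read $b^-\geq1$ and $b^+\geq1$ respectively, where $b^\pm$ are the positive and negative indices of the form. The bookkeeping must be checked carefully, because (\ref{16.1}) starts its right-hand sum at $b_4$ whereas (\ref{16.2}) includes $b_0$.

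\textbf{Case $\dim M=8k+4$.} Here $n=4k+2$ and the middle degree is $4k+2$ with $m=2k+1$ odd. The same rewriting gives $\sigma=2(\ldots)-b_{4k+2}$, and the sign flip accounts for the change of roles. The $8k-4$ case is handled analogously.

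Next we determine the signs of $b^\pm$. The class $[\omega]^m$ satisfies $\int\omega^{2m}>0$, so it is a positive vector and $b^+\ge1$. That gives one of the two inequalities outright, presumably (\ref{16.2}).

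For the inequality requiring $b^-\ge1$, the plan is to use the equality characterization instead of proving $b^-\ge1$ directly. The integrality and exact form of the identity show that the deficiency, namely the difference of the two sides of (\ref{16.1}), equals $b^-$ or $b^+-1$ up to a factor of $2$, depending on parity. Thus the inequality amounts to nonnegativity of a signature count, and equality holds iff $b^+=1$, respectively $b^-=0$.

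We then translate these conditions into the Cauchy--Schwarz conditions. Let $Q(\alpha)=\int\alpha^2\omega^{n-2m'}$ on $H^{2m'}$; when $2m'$ is the middle degree, $Q$ is exactly the intersection form. Decompose $\alpha=t[\omega^{m'}]+\beta$ with $\beta$ $Q$-orthogonal to $[\omega^{m'}]$, that is, $\int\beta\,\omega^{n-m'}=0$. Then
\[\Big(\int\alpha\,\omega^{n-m'}\Big)^2-Q(\alpha)\int\omega^n=-Q(\beta)\int\omega^n .\]
So the reverse Cauchy--Schwarz inequality, with the stated equality case, is equivalent to $Q$ being negative definite on the complement, i.e.\ $b^+=1$. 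Likewise the Cauchy--Schwarz inequality is equivalent to $Q$ being positive definite on the complement, i.e.\ $b^-=0$.

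\textbf{Main obstacle.} Combining the two steps gives the result; the difficulty lies in aligning them. In the $8k$ case the deficiency in (\ref{16.1}) should be $(b^+-1)/2\cdot 2=b^+-1\ge0$, with equality iff reverse Cauchy--Schwarz holds. In the other case the deficiency should be $b^-\ge0$, with equality iff Cauchy--Schwarz holds. The main obstacle is purely this sign and parity bookkeeping, namely matching which inequality pairs with $b^+-1$ and which with $b^-$ in each of the dimensions $8k$, $8k\pm4$. The geometric input is only that $[\omega^{m}]$ is a positive vector.
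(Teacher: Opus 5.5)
\textbf{There is a genuine gap.} Your ingredients are the paper's: the signature-alternating identity with Poincar\'e duality, $b^+_{2m}\ge1$ from $[\omega]^m$, and the translation of $b^+_{2m}=1$ (resp.\ $b^-_{2m}=0$) into the reverse (resp.\ ordinary) Cauchy--Schwarz inequality. Your identity $(\int\alpha\,\omega^{m})^2-Q(\alpha)\int\omega^{2m}=-Q(\beta)\int\omega^{2m}$ is a correct and clean version of Proposition~\ref{prop}.

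The gap is in the step that carries all the remaining content: matching each of (\ref{16.1}), (\ref{16.2}) with a statement about $b^\pm_{2m}$. You never carry this out, and what you do assert is wrong. You claim three things:
\begin{enumerate}
\item (\ref{16.1}) amounts to $\sigma(M)\le b_{4k}-2$, i.e.\ $b^-_{2m}\ge 1$;
\item (\ref{16.2}) amounts to $b^+_{2m}\ge1$, so positivity of $[\omega]^m$ gives ``presumably (\ref{16.2})'';
\item the deficiency in (\ref{16.1}) is $b^-_{2m}$ or $b^+_{2m}-1$ ``depending on parity''.
\end{enumerate}
In all three the inequalities are attached to the wrong quantities. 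The inequality $b^-_{2m}\ge1$ is false in general: for $\mathbb{P}^4$ one has $b_2=b_4=\sigma=1$ and $b^-_4=0$, yet (\ref{16.1}) holds, with equality. Your fallback, to use the equality characterization instead of proving $b^-\ge1$, is not an argument, since an equality criterion cannot produce an inequality. Also, if the deficiency of (\ref{16.1}) were $b^-_{2m}$ in dimension $8k+4$, equality would correspond to the Cauchy--Schwarz inequality, contradicting the statement. Apart from Proposition~\ref{prop}, this bookkeeping is the whole proof, so leaving it unresolved, and guessing it incorrectly, leaves the theorem unproved.

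The missing computation is short and does not depend on parity. For $\dim M=4m$, Poincar\'e duality turns the signature-alternating identity into $\sigma=2\sum_{j=0}^{m-1}(-1)^jb_{2j}+(-1)^mb_{2m}$. Substituting into $b^\pm_{2m}=\frac12(b_{2m}\pm\sigma)$ and using $b_0=1$ gives
\[
b^+_{2m}-1=\sum_{i=0}^{k-1}b_{4+4i}-\sum_{i=0}^{k-1}b_{2+4i}\quad(m=2k\text{ or }2k+1),
\]
\[
b^-_{2m}=\sum_{i=0}^{k-1}b_{2+4i}-\sum_{i=0}^{k-1}b_{4i}\quad(m=2k\text{ or }2k-1).
\]
Hence in every dimension $8k$, $8k\pm4$:
\begin{enumerate}
\item (\ref{16.1}) is exactly $b^+_{2m}\ge1$, with equality iff $b^+_{2m}=1$, i.e.\ iff reverse Cauchy--Schwarz holds;
\item (\ref{16.2}) is exactly the trivial $b^-_{2m}\ge0$, with equality iff $b^-_{2m}=0$, i.e.\ iff Cauchy--Schwarz holds.
\end{enumerate}
With these two identities in place, your argument becomes the paper's proof.
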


By Theorem \ref{mainapp4} and Corollary \ref{coro} we have
\begin{corollary}\label{coro2}
Let $(M,\omega)$ be a Hamiltonian $S^1$-manifold with $M^{S^1}=\coprod F$. Assume that all the connected components $F$ are signature-alternating, which particularly hold true if these $F$ are either isolated points or of $\dim F\equiv 2 \pmod 4$.
\begin{enumerate}
\item
If $\dim M=8k$ or $8k+4$, the inequality (\ref{16.1}) holds true, and the equality case in (\ref{16.1}) occurs if and only if $H^{4k}(M;\mathbb{R})$ or $H^{4k+2}(M;\mathbb{R})$ satisfies the reverse Cauchy-Schwarz inequality.

\item
If $\dim M=8k$ or $8k-4$, the inequality (\ref{16.2}) holds true, and the equality case in (\ref{16.2}) occurs if and only if $H^{4k}(M;\mathbb{R})$ or $H^{4k-2}(M;\mathbb{R})$ satisfies the Cauchy-Schwarz inequality.
\end{enumerate}
\end{corollary}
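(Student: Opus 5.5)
The plan is to reduce Corollary \ref{coro2} to Theorem \ref{mainapp5}. The only thing to check is that $M$ itself is signature-alternating in the sense of Definition \ref{def2}. Theorem \ref{mainapp5} already supplies both inequalities and the characterizations of their equality cases, and it requires no hypothesis on $M$ beyond being a signature-alternating symplectic manifold of the stated dimension.

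First I will dispose of the parenthetical claim. If a component $F$ is an isolated point, then (\ref{sig alter}) holds trivially: both sides equal $1$. If $\dim F\equiv 2\pmod 4$, then $\sigma(F)=0$ by convention. The alternating sum $\sum_i(-1)^ib_{2i}(F)$ also vanishes, because Poincar\'{e} duality pairs $b_{2i}(F)$ with $b_{\dim F-2i}(F)$, and these indices differ in parity when $\frac{1}{2}\dim F$ is odd. This is exactly the content of Remark \ref{remark2}. So in either case every component $F$ is signature-alternating.

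Next I will apply Theorem \ref{mainapp4} to the action, which gives
$$\sigma(M)=\sum_{i=0}^n(-1)^ib_{2i}(\xi).$$
Since the action is Hamiltonian, the class $\xi=[\omega(X,\cdot)]$ vanishes. By the fact $b_i(0)=b_i(M)$ recalled before Theorem \ref{farberthm}, each $b_{2i}(\xi)$ equals $b_{2i}(M)$. Hence $\sigma(M)=\sum_{i}(-1)^ib_{2i}(M)$, so $M$ is signature-alternating. This is the observation recorded in part $(2)$ of the remark following Theorem \ref{mainapp4}.

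Finally, with $(M,\omega)$ a signature-alternating symplectic manifold, I will invoke Theorem \ref{mainapp5}. Part $(1)$ applies in dimensions $8k$ or $8k+4$ and gives (\ref{16.1}) together with its reverse Cauchy-Schwarz equality characterization. Part $(2)$ applies in dimensions $8k$ or $8k-4$ and gives (\ref{16.2}) together with its Cauchy-Schwarz characterization.

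I do not expect a real obstacle here. The only point needing care is the identification $b_{2i}(\xi)=b_{2i}(M)$ in the Hamiltonian case, which rests on exactness of $\omega(X,\cdot)$, i.e.\ $\xi=0$.
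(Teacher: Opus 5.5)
Your proposal is correct and follows the paper's route: the paper also uses Theorem \ref{mainapp4} together with Corollary \ref{coro}/Remark \ref{remark2} to conclude that $M$ is signature-alternating. In the Hamiltonian case $\xi=0$, so $b_{2i}(\xi)=b_{2i}(M)$, and the paper then applies Theorem \ref{mainapp5} to get both inequalities and their equality characterizations.
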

\begin{remark}
As mentioned before the inequality (\ref{16.1}) in Corollary \ref{coro2} was obtained in \cite[Prop.5.4]{Lin2}, but without the characterization of the equality case .
\end{remark}

Taking $k=1$ (resp. $k=2$) in Part $(1)$ \big(resp. Part $(2)$\big) in Corollary \ref{coro2} yields the following consequences, which provide positive evidence towards Question \ref{question} under even more flexible conditions.
\begin{corollary}
Let $(M,\omega)$ be a Hamiltonian $S^1$-manifold with $M^{S^1}=\coprod F$, and these $F$ are either isolated points or of $\dim F\equiv 2 \pmod 4$.
\begin{enumerate}
\item
When $\dim M=8$ or $12$, we have $b_2(M)\leq b_4(M)$, with equality if and only if $H^{4}(M;\mathbb{R})$ or $H^{6}(M;\mathbb{R})$ satisfies the reverse Cauchy-Schwarz inequality.

\item
When $\dim M=12$ or $16$, and $b_2(M)=1$, we have $b_4(M)\leq b_6(M)$, with equality if and only if $H^{6}(M;\mathbb{R})$ or $H^{8}(M;\mathbb{R})$ satisfies the Cauchy-Schwarz inequality.
\end{enumerate}
\end{corollary}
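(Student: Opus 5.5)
The plan is to specialize Corollary \ref{coro2} to small $k$, so the argument is mostly bookkeeping of indices. First I check the hypotheses. Each connected component $F$ of $M^{S^1}$ is assumed to be either an isolated point or of $\dim F\equiv 2\pmod 4$. By Remark \ref{remark2} such an $F$ is automatically signature-alternating: for a point (\ref{sig alter}) is trivial, and when $\dim F\equiv2\pmod4$ both sides of (\ref{sig alter}) vanish. Hence Corollary \ref{coro2} applies to $M$. Equivalently, Theorem \ref{mainapp4} together with Corollary \ref{coro}, using $b_{2i}(\xi)=b_{2i}(M)$ in the Hamiltonian case, shows that $M$ itself is signature-alternating, so Theorem \ref{mainapp5} applies.

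For Part (1), I take $k=1$ in Part (1) of Corollary \ref{coro2}. Then $\dim M=8k$ or $8k+4$ becomes $\dim M=8$ or $12$, and the sums in (\ref{16.1}) reduce to single terms ($i=0$ only), giving $b_2(M)\le b_4(M)$. The equality criterion transfers verbatim with $H^{4k}=H^4$ and $H^{4k+2}=H^6$, which is the reverse Cauchy--Schwarz condition stated.

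For Part (2), I take $k=2$ in Part (2) of Corollary \ref{coro2}. Then $\dim M=8k$ or $8k-4$ becomes $\dim M=16$ or $12$, and (\ref{16.2}) reads
$$b_2(M)+b_6(M)\ \ge\ b_0(M)+b_4(M)=1+b_4(M),$$
using $b_0(M)=1$ by connectedness. Substituting the hypothesis $b_2(M)=1$ cancels the $1$ and leaves $b_6(M)\ge b_4(M)$. Equality here is exactly equality in (\ref{16.2}). So by Corollary \ref{coro2} it holds if and only if $H^{4k}=H^8$ or $H^{4k-2}=H^6$ satisfies the Cauchy--Schwarz inequality.

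There is no genuine obstacle beyond the previously established Theorem \ref{mainapp5}, which I take as given. The only points needing care are matching the dimension ranges ($12=8\cdot2-4$ in Part (2) and $12=8\cdot1+4$ in Part (1)) to the right cases of Corollary \ref{coro2}, and confirming that the cancellation in Part (2) preserves the equality characterization. It does, since the two inequalities differ only by subtracting the equal quantities $b_2(M)$ and $b_0(M)$.
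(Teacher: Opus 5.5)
Your proposal is correct and matches the paper's own argument: the corollary is obtained by taking $k=1$ in Part (1) and $k=2$ in Part (2) of Corollary \ref{coro2}, and in Part (2) the terms $b_0(M)=b_2(M)=1$ cancel in (\ref{16.2}). Your index matching ($12=8\cdot1+4$, $12=8\cdot2-4$), your check via Remark \ref{remark2} that the fixed components are signature-alternating, and your observation that the cancellation preserves the equality case are all accurate.
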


\subsection{Proof of Theorem \ref{mainapp5}}
As mentioned in Remark \ref{remark3}, in \cite{Li13} for K\"{a}hler manifolds a sufficient and necessary condition in terms of Hodge numbers is given to characterize precisely when the (reverse) Cauchy-Schwarz inequality holds on $H^{2i}(M;\mathbb{R})$, thanks to the Hodge theory. In the absence of such a theory for general symplectic manifolds, it seems difficult to present a condition in terms of suitable invariants on them to characterize (reverse) Cauchy-Schwarz inequality for \emph{all} $1\leq i\leq [n/2]$, which we pose at the end of this article as an open question (Question \ref{question2}). Nevertheless, we have the following solution when the complex dimension $n$ is even and $i=n/2$.
\begin{proposition}\label{prop}
For a $4m$-dimensional symplectic manifold $(M,\omega)$, $H^{2m}(M;\mathbb{R})$ satisfies the reverse Cauchy-Schwarz (resp. Cauchy-Schwarz) inequality if and only if $b^+_{2m}(M)=1$ (resp. $b^-_{2m}(M)=0$).
\end{proposition}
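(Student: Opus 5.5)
Here $i=m$ and $n=2m$, so the inequality in Definition \ref{def for cauchy-schwarz} only involves the intersection form $Q(\alpha,\beta)=\int_M\alpha\wedge\beta$ on $V:=H^{2m}(M;\mathbb{R})$. Writing $w:=[\omega^m]\in V$, the inequality reads
$$Q(\alpha,w)^2\geq Q(\alpha,\alpha)\,Q(w,w)\qquad(\text{resp. }\leq),$$
with equality exactly for $\alpha\in\mathbb{R}w$. The only geometric input is $Q(w,w)=\int_M\omega^{2m}>0$. By Poincaré duality $Q$ is nondegenerate of signature $(b^+_{2m},b^-_{2m})$. I will reduce everything to linear algebra of a nondegenerate symmetric form with a positive vector $w$.

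First decompose $V=\mathbb{R}w\oplus W$, where $W=w^{\perp_Q}$. This is a $Q$-orthogonal splitting because $Q(w,w)\neq0$. Nondegeneracy of $Q$ on $V$ gives nondegeneracy on $W$, and by Sylvester's law $Q|_W$ has signature $(b^+_{2m}-1,\,b^-_{2m})$. Write $\alpha=tw+\beta$ with $\beta\in W$. Then $Q(\alpha,w)=tQ(w,w)$ and $Q(\alpha,\alpha)=t^2Q(w,w)+Q(\beta,\beta)$, so
$$Q(\alpha,w)^2-Q(\alpha,\alpha)Q(w,w)=-Q(w,w)\,Q(\beta,\beta).$$
Since $Q(w,w)>0$, the reverse Cauchy--Schwarz inequality with its equality clause is equivalent to $Q(\beta,\beta)<0$ for all nonzero $\beta\in W$, i.e.\ $Q|_W$ negative definite. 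Here the equality clause $\alpha\in\mathbb{R}w$ is exactly $\beta=0$. Similarly, the Cauchy--Schwarz version is equivalent to $Q|_W$ being positive definite.

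Finally I translate these conditions using the signature count. $Q|_W$ is negative definite iff its positive index $b^+_{2m}-1$ vanishes, i.e.\ $b^+_{2m}=1$. It is positive definite iff $b^-_{2m}=0$. This is where nondegeneracy matters: definiteness with strict inequality for $\beta\neq0$ is equivalent to the appropriate index vanishing only because $Q|_W$ has no null directions.

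The main point needing care is the equality clause: one must check that the biconditional really requires strict definiteness, not just semidefiniteness, and that nondegeneracy makes the two coincide. I also need to handle the degenerate case $W=0$ (i.e.\ $b_{2m}=1$), where both inequalities hold trivially, $b^+=1$, and $b^-=0$, consistent with the claim.
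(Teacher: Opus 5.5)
Your proof is correct and is essentially the paper's argument: the paper extends $[\omega]^m/(\int_M[\omega]^{2m})^{1/2}$ to a basis that diagonalizes the intersection form as $\mathrm{diag}(1,\ldots,1,-1,\ldots,-1)$. That basis is your $Q$-orthogonal splitting $\mathbb{R}w\oplus w^{\perp_Q}$ written in coordinates, and the same identity $Q(\alpha,w)^2-Q(\alpha,\alpha)Q(w,w)=-Q(w,w)Q(\beta,\beta)$ drives both directions. Your coordinate-free version, which reduces each inequality together with its equality clause to definiteness of $Q|_W$ and then reads off the index via Sylvester, simply makes explicit the basis-extension step that the paper takes for granted.
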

\begin{proof}
First note that we always have $b^+_{2m}(M)\geq 1$ as $$\int_M[\omega^m]\wedge[\omega^m]=\int_M[\omega^{2m}]>0.$$
Take for simplicity $b^{\pm}:=b^{\pm}_{2m}(M)$.

Assume that $b^+=1$. We choose a base $$\alpha_1=\frac{[\omega]^{m}}{(\int_M[\omega]^{2m})^{1/2}},
~\beta_1,~\ldots,~\beta_{b^-}$$
of $H^{2m}(M;\mathbb{R})$ such that with respect to it the matrix of the intersection pairing $\int_M(\cdot)\wedge(\cdot)$ on $H^{2m}(M;\mathbb{R})$ is
$\text{diag$(1,-1,\ldots,-1)$}$.

Write $$\alpha=\lambda\alpha_1+\lambda_1\beta_1+\cdots+\lambda_{b^-}\beta_{b^-},\quad
(\lambda,\lambda_i\in\mathbb{R}),$$
We have
$$(\int_M\alpha\wedge[\omega]^{m})^2=\lambda^2\int_M[\omega]^{2m}$$
and
$$(\int_M\alpha^2)(\int_M[\omega]^{2m})=(\lambda^2-\sum_{i=1}^{b^-}\lambda_i^2)\int_M[\omega]^{2m},$$
and hence
$$(\int_M\alpha\wedge[\omega]^{m})^2-(\int_M\alpha^2)(\int_M[\omega]^{2m})=
(\sum_{i=1}^{b^-}\lambda_i^2)\int_M[\omega]^{2m}\geq0,$$
with equality if and only if all $\lambda_i=0$, i.e., $\alpha=\lambda\alpha_1$ is proportional to $[\omega]^{m}$.

Conversely, assume that $H^{2m}(M;\mathbb{R})$ satisfies the reverse Cauchy-Schwarz inequality and choose a base
$$\alpha_1=\frac{[\omega]^{m}}{(\int_M[\omega]^{2m})^{1/2}},~\alpha_2,~\ldots,
~\alpha_{b^+},
~\beta_1,~\ldots,~\beta_{b^-}$$
such that the matrix with respect to it is
\be\label{17}\text{diag$(\underbrace{1,\ldots,1}_{b^+},\underbrace{-1,\ldots,-1}_{b^-})$}.\ee
If $b^+\geq 2$, then
$$(\int_M\alpha_2\wedge[\omega]^{m})^2-(\int_M\alpha^2_2)(\int_M[\omega]^{2m})
=-\int_M[\omega]^{2m}<0,$$
which is a contradiction.

If $b^-=0$, we may choose a base
$$\alpha_1=\frac{[\omega]^{m}}{(\int_M[\omega]^{2m})^{1/2}},~\alpha_2,~\ldots,
~\alpha_{b^+},$$ whose corresponding matrix is $\text{diag$(1,\ldots,1)$}$. Then similar arguments as above yields the validity of the Cauchy-Schwarz inequality on $H^{2m}(M;\mathbb{R})$.

At last assume that $H^{2m}(M;\mathbb{R})$ satisfies the Cauchy-Schwarz inequality and on the contrary that $b^-\geq1$. Choose a base
$$\alpha_1=\frac{[\omega]^{m}}{(\int_M[\omega]^{2m})^{1/2}},~\alpha_2,~\ldots,
~\alpha_{b^+},~\beta_{1},~\ldots,~\beta_{b^-},$$ whose corresponding matrix is (\ref{17}). Then it is easy to see that the element $\beta_1$ contradicts to the Cauchy-Schwarz inequality:
$$(\int_M\beta_1\wedge[\omega]^{m})^2=0>
(\int_M\beta_1^2)(\int_M[\omega]^{2m})=-\int_M[\omega]^{2m}.$$
\end{proof}

Now we are ready to prove Theorem \ref{mainapp5}.
\begin{proof}
Assume that $(M,\omega)$ is a $4m$-dimensional symplectic manifold which is signature-alternating. For simplicity we take $b_i:=b_i(M)$ and $b^{\pm}_{2m}:=b^{\pm}_{2m}(M)$. Recall that $$\sigma(M)=b^+_{2m}-b^-_{2m},\quad b_{2m}=b^+_{2m}+b^-_{2m},\quad b^+_{2m}\geq1.$$
Hence we have

\begin{eqnarray}
\begin{split}
2&\leq 2b^+_{2m}\\
&=b_{2m}+\sigma(M)\\
&=b_{2m}+\big[b_0-b_2+b_4+\cdots+(-1)^mb_{2m}+\cdots-b_{4m-2}+b_{4m}\big]\\
&=\left\{\begin{array}{ll}
2+2(-b_2+b_4-b_6+\cdots-b_{2m-2}+b_{2m})\qquad\text{($m$ even)}\\
~\\
2+2(-b_2+b_4-b_6+\cdots+b_{2m-2}).\qquad\text{($m$ odd)}
\end{array}\right.\nonumber
\end{split}
\end{eqnarray}
This implies that
\begin{eqnarray}\label{18}
\left\{ \begin{array}{ll}
b_2+b_6+\cdots+b_{2m-2}\leq b_4+b_8+\cdots+b_{2m}\qquad\text{($m$ even, $m\geq2$)}\\
~\\
b_2+b_6+\cdots+b_{2m-4}\leq b_4+b_8+\cdots+b_{2m-2},\qquad\text{($m$ odd, $m\geq3$)}
\end{array} \right.
\end{eqnarray}
and the equality case occurs if and only if $b^+_{2m}=1$.

Taking $m=2k$ or $2k+1$ respectively in (\ref{18}) leads to the inequality (\ref{16.1}), and the equality characterization follows from Proposition \ref{prop}. This completes the proof of the first part in Theorem \ref{mainapp5}.

The treatment of the second part is similar because
\begin{eqnarray}\label{19}
\begin{split}
0&\leq 2b^-_{2m}\\
&=b_{2m}-\sigma(M)\\
&=b_{2m}-\big[b_0-b_2+b_4+\cdots+(-1)^mb_{2m}+\cdots-b_{4m-2}+b_{4m}\big]\\
&=\left\{\begin{array}{ll}
2(b_2-b_0+b_6-b_4+\cdots+b_{2m-2}-b_{2m-4})\qquad\text{($m$ even)}\\
~\\
2(b_2-b_0+b_6-b_4+\cdots+b_{2m}-b_{2m-2}),\qquad\text{($m$ odd)}
\end{array}\right.
\end{split}
\end{eqnarray}
with equality if and only if $b^-_{2m}=0$. Taking $m=2k$ or $2k-1$ respectively in (\ref{19}) yields the inequality (\ref{16.2}), and the equality characterization also follows from Proposition \ref{prop}.
\end{proof}

Let us end this article by posing the following question, which we think may be interesting on its own.
\begin{question}\label{question2}
Let $(M,\omega)$ be a symplectic manifold. For each $1\leq i\leq [n/2]$, can we give a sufficient and necessary condition in terms of suitable invariants of $(M,\omega)$ to characterize precisely when the (reverse) Cauchy-Schwarz inequality introduced in Definition \ref{def for cauchy-schwarz} holds true on $H^{2i}(M;\mathbb{R})$ ?
\end{question}
\begin{remark}
The very special case of $n$ being even and $i=n/2$ has been answered in Proposition \ref{prop}.
\end{remark}

\end{document}